\documentclass[leqno]{article}
\pdfoutput=1
\usepackage{amssymb}
\usepackage{amsfonts}
\usepackage{amsmath}
\usepackage{amsfonts}
\usepackage{amsmath}
\usepackage{fancyhdr}
\usepackage{subfig}
\usepackage{graphicx}
\usepackage{caption}
\usepackage{geometry}
\usepackage{latexsym}
\usepackage{url}
\usepackage{hyperref}
\usepackage{mathrsfs,amsmath}
\usepackage{titlesec}
\usepackage{titletoc}
\usepackage{color}
\usepackage{enumitem}
\usepackage{amsthm}
\usepackage{xcolor}

\setcounter{MaxMatrixCols}{10}

\theoremstyle{plain}
\newtheorem{theorem}{Theorem}[section]

\newtheorem{proposition}[theorem]{Proposition}

\theoremstyle{definition}

\newtheorem{remark}[theorem]{Remark}
\newtheorem{example}{Example}[section]

\numberwithin{equation}{section}

\allowdisplaybreaks

\topmargin 0cm \oddsidemargin 0.66cm \evensidemargin 0.66cm
\textwidth 14.80cm \textheight 22.23cm

\headheight 0cm
\headsep 0cm

\def\R{{\mathbb R}}
\def\N{{\mathbb N}}


\numberwithin{equation}{section}

\begin{document}

\title{An Efficient Time-splitting Method for the Ehrenfest Dynamics\thanks{Research supported by  NSF grants no. DMS-1348092, DMS-1522184, and DMS-1107291:
RNMS KI-Net, as well as by the Office of the Vice Chancellor for Research and Graduate Education at the University of
Wisconsin-Madison with funding from the Wisconsin Alumni Research Foundation.}}
\author{Di Fang\footnote{%
Department of Mathematics, University of Wisconsin-Madison, Madison, WI
53706, USA (dfang9@wisc.edu).} , Shi Jin\footnote{%
Department of Mathematics, University of Wisconsin-Madison, Madison, WI
53706, USA (sjin@wisc.edu).}, and Christof Sparber\footnote{%
Department of Mathematics, Statistics, and Computer Science, University of
Illinois at Chicago, Chicago, IL 60607, USA (sparber@uic.edu)}}
\date{\today}

\maketitle

\begin{abstract}
The Ehrenfest dynamics, representing a quantum-classical mean-field type coupling, is a widely used approximation in quantum molecular dynamics.
In this paper, we propose a time-splitting method for an Ehrenfest dynamics,
in the form of a nonlinearly coupled Schr\"odinger-Liouville system. We prove
that our splitting scheme is stable uniformly with respect to the semiclassical
parameter, and, moreover, that it preserves a discrete semiclassical limit.
Thus one can accurately compute physical observables using time steps induced only by the classical Liouville equation, i.e., independent of
the small semiclassical parameter - in addition to classical mesh sizes for the Liouville equation. Numerical examples illustrate the validity of our meshing strategy.
\end{abstract}

\section{Introduction}

{\it Ab initio} methods have played a
fundamental role in the numerical simulation of large quantum systems, in particular in quantum molecular dynamics.
Different from classical approaches based on pre-defined potentials, the underlying idea of \textit{ab initio} molecular
dynamics is to compute the forces acting on the nuclei as a feedback of the
electronic structures. This procedure is also known as the
\textquotedblleft on-the-fly" calculation in the chemistry literature (for
detailed reviews, see, e.g., \cite{billing2003quantum, marx2009ab, marx2000ab,
tully1998mixed}). One of the most widely used of these methods is the
so-called \textit{Ehrenfest dynamics}, a mean-field treatment
named in honor of Paul Ehrenfest who was among the first to address the problem of
how to derive classical dynamics from the underlying quantum mechanical equations \cite%
{ehrenfest1927bemerkung}. His idea is to separate the whole system into two
parts: a fast varying, quantum mechanical part (for, say, electrons) and a slowly varying part (for the much heavier nuclei)
in which one can pass to the (semi-)classical limit. In quantum chemistry, this is usually
possible by taking advantage of the large mass difference between electrons and nuclei.

Typically, the Ehrenfest molecular dynamics refers to a Schr\"{o}dinger equation,
coupled with a classical Newtonian flow, cf. \cite{bornemann1996quantum, drukker1999basics, schutte1999singular,
szepessy2011, szepessy2015, tully1998mixed}. The simplest such model reads
\begin{equation}
\label{SN-1}
\left \{
\begin{split}
ih \partial _{t}\psi &=-\frac{h ^{2}}{2}\Delta_{x}\psi +V\left( x,y( t) \right) \psi , \quad \psi(0,x) = \psi_{\rm in}(x),\\
\dot{y} (t)  &= \eta(t), \quad y(0) = y_0,  \\
\dot \eta(t)& = -\nabla _{y}V_{\rm E}( y (t)), \quad \eta(0)=\eta_0 .
\end{split}
\right.
\end{equation}
Here, we denote by $0<h\ll 1 $ a dimensionless rescaled Planck's constant, and by
$\psi= \psi\left( x,t\right)$ with $x\in \mathbb{R}^d$, $t\in \mathbb{R}_{+}$, the wave function of the fast, quantum mechanical degrees of freedom, which is
assumed to be normalized such that $\| {\psi} (\cdot, t)\|_{L^2}=1$ for all $t\geq 0$.
In addition, the slow degrees of freedom are described, for any time $t\in \mathbb{R}_+$, by their classical position $y(t) \in\mathbb{R}^n $ and momentum $\eta(t)\in\mathbb{R}^n$.
We thereby allow for $n\in \mathbb N$ and $d\in \mathbb N$ to be not necessarily equal, depending on the physical application.
Finally, for a given coupling potential $V=V(x,y)\in \mathbb{R}$, the force describing the back-reaction of the quantum part onto the slow degrees of freedom
is given by the gradient in $y\in \mathbb{R}^n$ of the so-called {\it Ehrenfest potential}
\begin{equation*}
V_{\rm E}(y,t)=\int_{\mathbb{R}^{d}}V\left( x,y\right) \left\vert \psi (x,t)\right\vert ^{2}\, dx.
\end{equation*}
Clearly, one obtains a version of Newton's second law for $y(t)$ by eliminating the
momentum variable $\eta(t)$ and writing
\[
\ddot y(t) = -\nabla _{y}V_{\rm E}( y (t)),
\]
instead of the first order Hamiltonian system above.

Regarding the derivation of Ehrenfest dynamics, the majority of literature available today
invokes WKB asymptotics for the slow degrees of freedom, leading to a Hamilton-Jacobi equation which suffers from
the appearance of caustics, see, e.g., \cite{bornemann1996quantum, schutte1999singular}. To circumvent this problem and derive
a semiclassical limit which is valid globally in time, a, by now classical, tool is the
{\it Wigner transform} \cite{wigner1932quantum}. The latter gives rise to
a {\it Liouville equation} for the associated semi-classical phase-space measure (or, Wigner measure) which ``unfolds the caustics", see \cite{wigner4, lions1993mesures,
markowich1993classical, SMM2003}. In the context of Ehrenfest dynamics, such an analysis was carried
out in \cite{Jin:2017bh}. In there, the authors start from a system of \textit{%
time-dependent, self-consistent field equations}, motivated by \cite%
{tdscf1, tdscf3, tdscf2, tdscf4}, and derive (among other things) the following mixed quantum-classical system:
\begin{equation}
\label{SL-1}
\left\{
\begin{split}
& ih \partial _{t}\psi ^{h }=-\frac{h ^{2}}{2}\Delta _{x}\psi
^{h }+\Upsilon ^{h }\left( x,t\right) \psi ^{h }, \quad \psi^h (0,x) = \psi^h_{\rm in}(x) \\
& \partial _{t}\mu ^{h }+\eta \cdot \nabla _{y}\mu ^{h }+F^{h
}\left( y,t\right) \cdot \nabla _{\eta }\mu ^{h }=0,  \quad \mu^h(0,x,\eta) = \mu_{\rm in}(y, \eta).
\end{split}
\right.
\end{equation}%
Here, $\mu ^{h } (\cdot, \cdot, t) \in \mathcal M^+(\mathbb{R}^n_y\times \mathbb{R}^n_\eta)$
denotes the phase-space probability density for the slowly varying degrees of freedom at time $t$, $F^h = - \nabla_y V_{\rm E}$, i.e., the force obtained from the Ehrenfest potential, and
\begin{eqnarray}
\label{potentials}
\Upsilon ^{h }\left( x,t\right) &=&\iint_{\mathbb{R}^{2n}}V\left(
x,y\right) \mu ^{h }\left( y,\eta ,t\right) \, dy\, d\eta .
\end{eqnarray}%
We call this system  the \textit{%
Schr\"{o}dinger-Liouville-Ehrenfest} (SLE) System and from now on represent the dependence on the small semi-classical parameter $h>0$ by superscripts.
Note that the dependence of $\mu^h$ on $h$ stems purely from the forcing through the Ehrenfest potential appearing in the Liouville equation.
The latter is an Eulerian description of the classical Hamiltonian flow. In particular, one formally obtains \eqref{SN-1}, from \eqref{SL-1},
in the case where $\mu$ corresponds to a single particle distribution concentrated on the classical trajectories $(y(t), \eta(t))$, i.e.,
\[
\mu(t,y,\eta) = \delta(y-y(t), \eta-\eta(t)).
\]
Such kind of Wigner measures can be obtained as the classical limit of a particular type of wave functions, called semi-classical wave packets, or coherent states, see \cite{lions1993mesures}.

Given the dispersive nature of Schr\"odinger's equation, the main numerical difficulty for $h \ll 1$
is that one needs to resolve oscillations of frequency of order $\mathcal O(1/h)$ in both time and space, as they are
present in the solution $\psi^h$, see \cite{JMS} for a broad review of this problem. Naively, this requires one to use
time-steps of order $\Delta t= o(h)$ as well as a spatial grid with $\Delta x = o(h)$. However, it was proved in
\cite{Bao:2002fy}, using a Wigner measure
analysis, that for a single linear Schr\"odinger equation,
a time-splitting spectral method can still correctly capture {\it physical observables}, i.e., real-valued quadratic quantities in $\psi^h$, even for time-steps
much larger than $h$. Thus one only needs to
resolve the high frequency oscillations spatially, which is a huge numerical advantage.
For nonlinear Schr\"odinger equations, in general, this is no longer true, as was numerically
demonstrated in \cite{bao2003numerical}.
The SLE system (\ref{SL-1}) is a nonlinearly coupled system, and one therefore expects the same type of problem at first glance. Nevertheless, we shall in the following
develop an efficient numerical method for
the SLE system which allows large
(compared with $h$) computational mesh-sizes in both $y$ and $\eta$ and a large time step for both the Schr\"odinger and the Liouville
equations, while still {\it correctly capturing physical observables}. While large
meshes in $y$ and $\eta$ do not seem so surprising, the possibility of large time steps for solving
the Schr\"odinger equation is far from obvious, due to the nonlinear nature
of the SLE system.

Our numerical algorithm is inspired by, but \textit{different} from the time-splitting method used in \cite{Jin:2017bh}.
Based on a spectral method for the Schr\"odinger equation and an upwind scheme for the Liouville part of \eqref{SL-1},
we shall first prove stability for our algorithm, uniformly in $h$. Furthermore, by utilizing the Wigner
analysis developed in \cite{Jin:2017bh} and adopting it to our particular setting, we shall also
prove that physical observables (which can
be characterized by the moments of the Wigner distribution), are captured
correctly even if $\Delta y$, $\Delta \eta$ and $\Delta t$, i.e., the time step for the entire SLE system, are $\mathcal O(1)$ and thus
\textit{independent of} $h$. To this end, we follow the strategy of \cite{Bao:2002fy}, and prove
that the semi-discretized SLE-system, with $\Delta y$, $\Delta \eta$, $\Delta t$ fixed, converges to the correct semiclassical
limiting system, as $h \to 0$. In this analysis we shall, for simplicity, consider $x$ to be
continuous, since, as already stated above, $\Delta x \to 0$, as $h \to 0$, even for a single linear Schr\"odinger equation.
In summary, our scheme can be seen to be \textit{asymptotic-preserving} in $t$, $y$, and
$\eta$, which is a well-established numerical concept for multi-scale
kinetic equations, cf. \cite{Jin99, Jin-review}. To our knowledge, this is the
\textit{first} work that proves the existence of a {\it global in-time} $h$-\textit{independent} meshing strategy for physical observables associated to 
a \textit{nonlinear} Schr\"odinger-type system.

In this context, we note that the authors of \cite{Carles1, Carles2} study a time-splitting scheme for nonlinear Schr\"odinger equations with 
{\it cubic nonlinearity}. Using a WKB type representation of the solution, they are able to prove a similar asymptotic preserving property. However, the main drawback of 
their method is, that it is only valid before the formation of caustics in the Hamilton-Jacobi equation for the WKB phase function. The system studied in the present paper has a weaker nonlinear structure 
which allows the use of Wigner transformation techniques which are valid for all time $t$.

The rest of this paper is now organized as follows: In Section \ref{sec:scheme} we
present the time-splitting method for the
SLE system and briefly discuss some of the inherent numerical difficulties. The stability, uniformly in $h$,
is then proved in Section \ref{sec:stab} for the fully discretized system. In
Section \ref{sec:wigner}, we shall give a brief review of Wigner transformation methods and
the classical limit of the SLE system. The spatial meshing strategy announced above is then studied in
Section \ref{sec:mesh} by deriving the classical limit of a semi-discrete SLE system. In Section \ref{sec:time}
we focus on the time-discretization and prove that our scheme allows for time-steps independent of $h$.
Finally, Section \ref{sec:examples} presents some numerical examples illustrating our
analytical results.


\section{The time-splitting scheme and its basic properties}\label{sec:scheme}

We shall, from now on, consider the Schr\"{o}dinger-Liouville-Ehrenfest (SLE) System
(\ref{SL-1}) with the following assumption on the coupling potential $V$:
\begin{equation*} \label{a1}
V\in C_{0}^{2}\left( \mathbb{R}_{x}^{d}\times
\mathbb{R}_{y}^{n}\right) \ \text{and }V\left( x,y\right) \geq 0,\forall
\left( x,y\right) \in \mathbb{R}_{x}^{d}\times \mathbb{R}_{y}^{n}, \tag{A1}
\end{equation*}%
where $C^2_{0}$ denotes the set of twice continuously differentiable functions which vanish at infinity together with
all their derivatives.

\begin{remark}
This is the same
assumption as in \cite{Jin:2017bh}, where it is used to furnish a rigorous Wigner analysis of the
self-consistent field equation. Note that,
in particular, it implies $V \in W^{2,\infty }\left( \mathbb{R}_{x}^{d}\times
\mathbb{R}_{y}^{n}\right)$. It is conceivable that the regularity requirement and the decay at infinity can be lowered
at the expense of more technicalities.
The assumption $V(x,y)\geq 0$ is in fact not very restrictive for the potentials
bounded from below. It corresponds to a proper choice of the zero point of the
potential axis.
\end{remark}

We aim for an algorithm which fully utilizes the quantum-classical coupling. Thus,
while it makes sense to use a finer (i.e., smaller than $h$)
spatial discretization in $x$ to solve the Schr\"odinger equation, we want to use much larger (than $h$) meshes in $y$ and $\eta$
when solving the Liouville equation. That this is indeed possible is not obvious, since the potential $\Upsilon ^{h }\left( x,t\right) $
appearing in the Schr\"{o}dinger equation is time-dependent, and moreover nonlinearly coupled to the Liouville equation (hence it inherits the
computational error obtained from discretizing in $y$ and $\eta$).

\begin{remark}\label{remcom}
In our discussion, we will only consider compactly supported initial
data $\psi^h_{\rm in}$, $\mu_{\rm in}$, in order to simulate the SLE system problem based on an
infinitely large spatial domain within a sufficiently large, but finite box with periodic
boundary conditions.
\end{remark}


\subsection{A new time-splitting scheme for the SLE system}\label{sec:tsscheme}

In order to describe our scheme, we henceforth assume that we are given a sufficiently small $\Delta x\sim \mathcal O(h)$, used to solve the quantum mechanical part of \eqref{SL-1}, while
the larger grid meshes $\Delta y,\Delta \eta \sim \mathcal O\left( 1\right)  $ are applied for the classical
part. With this in mind, let
\begin{equation*}
J=\frac{d-c}{\Delta y},\ K=%
\frac{\beta -\alpha }{\Delta \eta }, \ M=\frac{b-a}{\Delta x}, \ y_{j}=c+j\Delta y,\ \eta _{k}=\alpha
+k\Delta \eta, \ x_{j}=a+j\Delta x.
\end{equation*}

The time-splitting spectral scheme can then described as
follows: From time $t=t_{n}=n\Delta t$ to $t=t_{n+1}=\left( n+1\right)
\Delta t$, with $\Delta t$ given, the SLE system is solved in two steps. First, solve
\begin{equation}\label{step1}
\left\{
\begin{split}
& ih \partial _{t}\psi ^{h }  =-\frac{h ^{2}}{2}\Delta _{x}\psi
^{h },   \\
& \partial _{t}\mu ^{h }= -\eta \cdot \nabla _{y}\mu ^{h }-F^{h
}\left( y,t\right) \cdot \nabla _{\eta }\mu ^{h },
\end{split}
\right.
\end{equation}%
from $t=t_{n}$ to an intermediate time $t_{\ast }$. Then, solve
\begin{equation}\label{step2}
\left\{
\begin{split}
& ih \partial _{t}\psi ^{h }=\Upsilon^{h} \left( x,t\right) \psi
^{h },  \\
& \partial _{t}\mu ^{h }=0,
\end{split}
\right.
\end{equation}
with initial data obtained from Step 1, to obtain the solution at time $t=t_{n+1}$.

In (\ref{step1}), the Schr\"odinger equation will be discretized
in space by a spectral method and integrated in time exactly using a Fast Fourier Transform.
The Liouville equation can be solved either by a spectral method, or by a finite difference
(e.g., upwind) scheme in space, and then marching the corresponding ODE system forward in time.
An advantage of our splitting method is that in the second step, $\Upsilon ^{h }\left(
x,t\right) $ defined in (\ref{potentials}) is indeed {\it independent} of time, since obviously $\mu ^{h }$ is.
In view of this, the time integration in \eqref{step2} can also be solved exactly, which yields
\begin{equation*}
\psi _{j}^{h ,n+1}=\exp \left( -\frac{i}{h }\Upsilon ^{h
}\left( x_{j},t_{\ast }\right) \Delta t\right) \psi _{j}^{h ,\ast }.
\end{equation*}

For the convenience of our later discussions, we shall now state our numerical scheme using an upwind
spatial discretization of $\mu $ in more detail: The problem is solved in one spatial dimension $d=n=1$ from time
$t=t_{n}$ to time $t=t_{n+1}$ using the following two steps:

In the first step, we solve
\begin{equation}\label{ts1}
\left\{
\begin{split}
& ih \partial _{t}\psi ^{h }=-\frac{h ^{2}}{2}\partial
_{xx}\psi ^{h }, \\
&\frac{d}{dt}\mu _{jk}^{h } =-\eta _{k}\left( D_{y}\mu
^{h }\right) _{jk}-F_{j}^{h }\left( D_{\eta }\mu ^{h }\right)
_{jk},%
\end{split}
\right.
\end{equation}%
where both $D_{y}\mu ^{h }$ and $D_{\eta }\mu ^{h }$ represent the
numerical derivatives in our algorithm, which are treated using a standard \textit{conservative} (for example, the upwind type) discretization. 
To solve the Liouville equation we shall we apply a forward-in-time Euler scheme for the time discretization.
Explicitly, we thus have
\begin{equation} \label{upwind1}
\left\{
\begin{split}
& \psi _{j}^{h ,\ast }=\frac{1}{{M}}\sum\limits_{\ell=-{{M}}/2}^{{M}/2-1}e^{-ih
\omega _{\ell}^{2}/2}\hat{\psi}_{\ell}^{h ,n}e^{i\omega _{\ell}\left(
x_{j}-a\right) },\quad  j=0,\dots ,{M}-1 , \\
& \frac{\mu _{jk}^{h ,\ast }-\mu _{jk}^{h ,n}}{\Delta t}=-\eta
_{k}\left( D_{y}\mu ^{h ,n}\right) _{jk}-F_{j}^{h ,n}\left(
D_{\eta }\mu ^{h ,n}\right) _{jk},
\end{split}%
\right.
\end{equation}%
where ${w_\ell = \frac{2 \pi \ell}{b-1} }$ and, for the upwind spatial discretization,
\begin{equation*}
\left\{
\begin{split}
& \eta_{k}\left( D_{y}\mu ^{h ,n}\right) _{jk}
= \frac{1}{2}\left({\eta_{k}+\left\vert \eta _{k}\right\vert }\right) \frac{\mu _{jk}^{h,n }-\mu_{j-1,k}^{h,n }}{\Delta y}
+\frac{1}{2}(\eta_{k}-\left\vert \eta _{k}\right\vert )\frac{\mu _{j+1,k}^{h,n }-\mu_{j,k}^{h,n }}{\Delta y} ,
\\
& F_{j}^{h ,n}\left( D_{\eta }\mu ^{h,n }\right) _{jk}
=\frac{1}{2}(F_{j}^{h,n }+\left\vert F_{j}^{h,n }\right\vert ) \frac{ \mu_{jk}^{h,n }-\mu _{j,k-1}^{h }}{\Delta \eta}
+\frac{1}{2}(F_{j}^{h,n }-\left\vert F_{j}^{h,n }\right\vert ) \frac{ \mu_{j,k+1}^{h,n }-\mu _{jk}^{h,n }}{\Delta \eta}.
\end{split}
\right.
\end{equation*}

The second step is then given by
\begin{equation}\label{ts2}
\left\{
\begin{split}
& ih \partial _{t}\psi ^{h }=\Upsilon _{d}^{h }\left(
x,t\right) \psi ^{h }, \\
& \frac{d}{dt}\mu _{jk}^{h }=0,
\end{split}%
\right.
\end{equation}%
where $\Upsilon_{d}^{h }\left( x,t\right) $ is the quadrature approximation of $%
\Upsilon ^{h }\left( x,t\right) $.
Thus, we explicitly have
\begin{equation} \label{upwind2}
\begin{split}
\psi _{j}^{h ,n+1}=\exp \left(-i\Upsilon _{d}^{h ,\ast }\left(
x_{j}\right) {{ \Delta t}} /h \right) \psi _{j}^{h ,\ast } , \quad  \mu _{jk}^{h ,n+1}=\mu _{jk}^{h ,\ast }%
\end{split}%
\end{equation}%
where
\begin{align*}
\Upsilon _{d}^{h ,\ast }\left( x\right)
 =\sum\limits_{j=0}^{J-1}\sum\limits_{k=0}^{K-1}V\left( x,y_{j}\right) \mu
_{jk}^{h ,\ast }\Delta y\Delta \eta
 =\sum\limits_{j=0}^{J-1}\sum\limits_{k=0}^{K-1}V\left( x,y_{j}\right) \mu
_{jk}^{h ,n+1}\Delta y\Delta \eta ,
\end{align*}
which can be viewed as a trapezoidal rule for $\mu$ with compact support, cf. Remark \ref{remcom}.

\begin{remark}
It is straightforward to obtain an algorithm second order in time using the Strang splitting, which
is omitted here.
\end{remark}


\subsection{Conservation property and stability of the scheme}\label{sec:stab}

We shall now prove the stability of the scheme given by (\ref{upwind1}) and (\ref{upwind2}). To this end, let $\mathbf{\psi =}\left( \psi _{0},\ldots ,\psi _{M-1}\right) ^{T}$. Let $%
\left\Vert \cdot \right\Vert _{L^{2}}$ and $\left\Vert \cdot \right\Vert
_{\ell^2}$ be the usual $L^{2}$ and $\ell^2$ norm on the interval $\left(
a,b\right) $ respectively, i.e.%
\begin{equation}  \label{normdef}
\left\Vert {{\phi}} \right\Vert _{L^{2}}=\left(\int_{a}^{b}\left\vert  {{\phi}}
\left( x\right) \right\vert ^{2}\, dx\right)^{1/2},\quad
\left\Vert \mathbf{\psi }\right\Vert
_{\ell^2}=\left(\frac{b-a}{M}\sum\limits_{j=0}^{M-1}\left\vert \psi
_{j}\right\vert ^{2}\right)^{1/2}.
\end{equation}%
Notice that, for any periodic function $f$, the equality%
\begin{equation} \label{norm}
\left\Vert f_{\rm I}\right\Vert^2 _{L^{2}}=\left\Vert f\right\Vert^2 _{\ell^2}=%
\frac{b-a}{M}\sum\limits_{j=0}^{M-1}\left\vert f\left( x_{j}\right)
\right\vert ^{2}
\end{equation}%
holds, where $f_{\rm I}$ denotes the trigonometric interpolant of $f$ on $%
\left\{ x_{0},x_{1},\ldots ,x_{M}\right\} $, i.e.%
\begin{equation*}
f_{\rm I}\left( x\right) =\frac{1}{M}\sum\limits_{\hat{\jmath}=-\frac{M}{2}}^{%
\frac{M}{2}-1}\hat{f}_{\hat{\jmath}}e^{i\omega _{\hat{\jmath}}\left(
x-a\right) },\quad \omega _{\hat{\jmath}}=\frac{2\pi \hat{\jmath}}{b-a}. 
\end{equation*}%
Using this we can prove the following theorem.

\begin{theorem} \label{mass cons}
The time-splitting spectral scheme conserves the mass. More precisely, it holds%
\begin{equation*}
{\|\mathbf{\psi }^{h ,n}\|} _{\ell^2}={\| \psi^{h, 0}\|} _{\ell^2},\quad  {\| \psi _{\rm I}^{h
,n}\| }_{L^{2}}={\|\psi_{\rm I}^{h ,0}\|}
_{L^{2}},\quad  \text{ for $n=1,2,\dots ,$}
\end{equation*}%
{{where, as before, $\psi _{\rm I}^{h,n}$} denotes the trigonometric interpolant of $\psi^{h
,n}$}. In addition,
\begin{equation*} 
\sum\limits_{j=0}^{J-1}\sum\limits_{k=0}^{K-1}%
\mu _{jk}^{h, n }=\sum\limits_{j=0}^{J-1}\sum\limits_{k=0}^{K-1}%
\mu_{jk} ^{h,0 }.
\end{equation*}
\end{theorem}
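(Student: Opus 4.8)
The plan is to treat the three identities separately, exploiting that each of the two substeps of the splitting is an isometry for the relevant norm, and that the upwind update is in conservative form.

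First I would establish the $\ell^2$-conservation of $\psi^{h,n}$ by showing that both \eqref{upwind1} and \eqref{upwind2} preserve $\|\cdot\|_{\ell^2}$. For the free Schr\"odinger substep in \eqref{upwind1}, the update acts in frequency space by multiplying $\hat\psi_\ell^{h,n}$ by the phase factor $e^{-ih\omega_\ell^2/2}$, which has modulus one. Since the discrete Fourier transform is, up to the fixed normalization in \eqref{normdef}, unitary, the discrete Parseval identity yields $\|\psi^{h,*}\|_{\ell^2}=\|\psi^{h,n}\|_{\ell^2}$. For the potential substep \eqref{upwind2}, the map is the pointwise multiplication $\psi_j^{h,*}\mapsto e^{-i\Upsilon_d^{h,*}(x_j)\Delta t/h}\psi_j^{h,*}$; because $V$ is real-valued and $\mu_{jk}^{h,*}$ is real, the quadrature potential $\Upsilon_d^{h,*}(x_j)$ is real, so the multiplier is unimodular and $|\psi_j^{h,n+1}|=|\psi_j^{h,*}|$ for every $j$. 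Composing the two isometries gives $\|\psi^{h,n+1}\|_{\ell^2}=\|\psi^{h,n}\|_{\ell^2}$, and an induction on $n$ produces the first identity. The $L^2$-identity for the interpolant then follows immediately by applying \eqref{norm} to the grid vector $\psi^{h,n}$, which converts the already established $\ell^2$-conservation into $\|\psi_{\rm I}^{h,n}\|_{L^2}=\|\psi^{h,n}\|_{\ell^2}=\|\psi^{h,0}\|_{\ell^2}=\|\psi_{\rm I}^{h,0}\|_{L^2}$.

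For the conservation of the total mass of $\mu$, I would note that the second substep \eqref{upwind2} leaves $\mu$ untouched, so it suffices to control the upwind update in \eqref{upwind1}. Summing the increment $\mu_{jk}^{h,*}-\mu_{jk}^{h,n}=-\Delta t\big[\eta_k(D_y\mu^{h,n})_{jk}+F_j^{h,n}(D_\eta\mu^{h,n})_{jk}\big]$ over all $j$ and $k$, I would use the conservative (flux-difference) structure of the upwind discretization together with the periodic boundary conditions of Remark \ref{remcom}. Concretely, in the $y$-advection term the coefficients $\tfrac12(\eta_k\pm|\eta_k|)$ depend only on $k$, so the differences $\mu_{jk}-\mu_{j-1,k}$ and $\mu_{j+1,k}-\mu_{jk}$ telescope to zero when summed over the periodic index $j$; symmetrically, in the $\eta$-forcing term the coefficients $\tfrac12(F_j^{h,n}\pm|F_j^{h,n}|)$ depend only on $j$, so the telescoping sum over the periodic index $k$ vanishes. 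Hence the total increment is zero, $\sum_{j,k}\mu_{jk}^{h,*}=\sum_{j,k}\mu_{jk}^{h,n}$, and combined with $\mu^{h,n+1}=\mu^{h,*}$ and an induction on $n$ this gives the third identity.

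I do not expect a genuine obstacle here; the argument is structural rather than analytical. The two places where the specific model enters are (i) the reality of $\Upsilon_d^{h,*}$, needed so that the Step~2 multiplier is unimodular, which relies on assumption (A1) that $V$ is real-valued together with the realness of the discrete density $\mu_{jk}^{h,*}$, and (ii) the exploitation of the conservative flux form of the upwind scheme, where periodicity is precisely what makes every boundary contribution in the telescoping sums cancel. The only care required is in bookkeeping the index shifts and in checking that the normalization in \eqref{normdef} is exactly the one under which the discrete Fourier transform is an isometry; both are routine.
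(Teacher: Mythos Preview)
Your proposal is correct and follows essentially the same approach as the paper: reduce the $L^2$ identity to the $\ell^2$ one via \eqref{norm}, show each splitting substep is an $\ell^2$-isometry (unimodular multipliers in Fourier space for the free step, pointwise unimodular multipliers for the potential step), and use the conservative/telescoping structure of the upwind scheme for the $\mu$-identity. The only cosmetic difference is that the paper unpacks the discrete Parseval identity through an explicit orthogonality computation rather than invoking it by name.
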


\begin{proof} First note that the last identity for $ \mu _{jk}^{h, n }$ is a straightforward
consequence of the fact that the discretized derivatives $D_{y} \mu$ and $D_{\eta} \mu$ are conservative.

It suffices to prove the first identity stated above due to (\ref{norm}). Noting our numerical algorithm (\ref{upwind1}), (\ref{upwind2}) and the definition of the norms (\ref{normdef}), one computes
\begin{eqnarray*}
\frac{1}{b-a}\left\Vert \mathbf{\psi }^{h ,n+1}\right\Vert _{\ell^2}^{2}
&=&\frac{1}{M}\sum\limits_{j=0}^{M-1}\left\vert \psi _{j}^{h
,n+1}\right\vert ^{2}=\frac{1}{M}\sum\limits_{j=0}^{M-1}\left\vert \exp
\left( -\frac{i}{h }\Upsilon _{d}^{h }\left( x_{j},t_{\ast
}\right) \Delta t\right) \psi _{j}^{h ,\ast }\right\vert ^{2} \\
&=&\frac{1}{M}\sum\limits_{j=0}^{M-1}\left\vert \psi _{j}^{h ,\ast
}\right\vert ^{2}=\frac{1}{M}\sum\limits_{j=0}^{M-1}\left\vert \frac{1}{M}%
\sum\limits_{\hat{\jmath}=-\frac{M}{2}}^{\frac{M}{2}-1}e^{-ih  {{ \Delta t}}\omega _{%
\hat{\jmath}}^{2}/2}\hat{\psi}_{\hat{\jmath}}^{h ,n}e^{i\omega _{\hat{%
\jmath}}\left( x_{j}-a\right) }\right\vert ^{2} \\
&=&\frac{1}{M}\sum\limits_{j=0}^{M-1}\left( \frac{1}{M^{2}}\sum\limits_{p=-%
\frac{M}{2}}^{\frac{M}{2}-1}\sum\limits_{q=-\frac{M}{2}}^{\frac{M}{2}%
-1}e^{ih  {{ \Delta t}} \left( \omega _{p}^{2}-\omega _{q}^{2}\right) /2}\overline{%
\hat{\psi}_{p}^{h ,n}}\hat{\psi}_{q}^{h ,n}e^{i\left( \omega
_{q}-\omega _{p}\right) \left( x_{j}-a\right) }\right).
\end{eqnarray*}
Changing the order of summation, this is equal to
\begin{eqnarray*}
\frac{1}{b-a}\left\Vert \mathbf{\psi }^{h ,n+1}\right\Vert _{\ell^2}^{2}
&=&\frac{1}{M^{2}}\sum\limits_{p=-\frac{M}{2}}^{\frac{M}{2}%
-1}\sum\limits_{q=-\frac{M}{2}}^{\frac{M}{2}-1}e^{ih  {{ \Delta t}} \left( \omega
_{p}^{2}-\omega _{q}^{2}\right) /2}\overline{\hat{\psi}_{p}^{h ,n}}\hat{%
\psi}_{q}^{h ,n}\left( \frac{1}{M}\sum\limits_{j=0}^{M-1}e^{i\left(
\omega _{q}-\omega _{p}\right) \left( x_{j}-a\right) }\right) \\
&=&\frac{1}{M^{2}}\sum\limits_{\hat{\jmath}=-\frac{M}{2}}^{\frac{M}{2}%
-1}\left\vert \hat{\psi}_{\hat{\jmath}}^{h ,n}\right\vert ^{2}=\frac{1}{%
M^{2}}\sum\limits_{\hat{\jmath}=-\frac{M}{2}}^{\frac{M}{2}-1}\left\vert
\sum\limits_{j=0}^{M-1}\psi _{j}^{h ,n}e^{-i\omega _{\hat{\jmath}}\left(
x_{j}-a\right) }\right\vert ^{2} \\
&=&\frac{1}{M^{2}}\sum\limits_{\hat{\jmath}=-\frac{M}{2}}^{\frac{M}{2}%
-1}\left( \sum\limits_{p=0}^{M-1}\sum\limits_{q=0}^{M-1}\overline{\psi
_{p}^{h ,n}}\psi _{q}^{h ,n}e^{i\omega _{\hat{\jmath}}\left(
x_{q}-x_{p}\right) }\right),
\end{eqnarray*}
where the second equality of the above follows from the fact that%
\begin{equation*}
\frac{1}{M}\sum\limits_{j=0}^{M-1}e^{i\left( \omega _{q}-\omega _{p}\right)
\left( x_{j}-a\right) }=\frac{1}{M}\sum\limits_{j=0}^{M-1}e^{i2\pi \left(
q-p\right) j/M}=\left\{
\begin{array}{cc}
0, & q-p\neq mM \\
1, & q-p=mM%
\end{array}%
\right. ,\ m\in \mathbb Z.
\end{equation*}%
Similarly, by changing the order of summation again, we arrive at
\begin{eqnarray*}
\frac{1}{b-a}\left\Vert \mathbf{\psi }^{h ,n+1}\right\Vert _{\ell^2}^{2}
&=&\frac{1}{M}\sum\limits_{p=0}^{M-1}\sum\limits_{q=0}^{M-1}\overline{\psi
_{p}^{h ,n}}\psi _{q}^{h ,n}\left( \frac{1}{M}\sum\limits_{\hat{%
\jmath}=-\frac{M}{2}}^{\frac{M}{2}-1}e^{i\omega _{\hat{\jmath}}\left(
x_{q}-x_{p}\right) }\right) \\
&=&\frac{1}{M}\sum\limits_{j=0}^{M-1}\left\vert
\psi _{j}^{h ,n}\right\vert ^{2}=\frac{1}{b-a}\left\Vert \mathbf{\psi }%
^{h ,n}\right\Vert _{\ell^2}^{2},
\end{eqnarray*}%
where the following identity has been used%
\begin{equation*}
\frac{1}{M}\sum\limits_{\hat{\jmath}=-\frac{M}{2}}^{\frac{M}{2}-1}e^{i\omega
_{\hat{\jmath}}\left( x_{q}-x_{p}\right) }=\frac{1}{M}\sum\limits_{\hat{%
\jmath}=-\frac{M}{2}}^{\frac{M}{2}-1}e^{i2\pi \left( q-p\right) \hat{\jmath}%
/M}=\left\{
\begin{array}{cc}
0, & q-p\neq mM \\
1, & q-p=mM%
\end{array}%
\right. ,\ m\in \mathbb Z.
\end{equation*}
\end{proof}

\begin{remark}
Theorem \ref{mass cons} implies that the scheme is stable uniformly in $h$, provided the positivity of $\mu$ under the following CFL condition, cf. \cite{leveque1992numerical}:
\begin{equation} \label{cfl}
\max_k|\eta_k|\frac{\Delta t}{\Delta y}+\left\Vert \partial _{y}V\right\Vert _{L^{\infty }}\frac{\Delta t}{\Delta \eta} \leq 1.
\end{equation}
\end{remark}


\section{Classical limit of the SLE system}\label{sec:wigner}

As a preparatory step to the discussion of Section \ref{sec:mesh}, we will now briefly review the results of \cite{Jin:2017bh} concerning the classical limit
(via Wigner transforms) of the SLE system as $h \to 0$.

\subsection{Wigner transform and Wigner measure}

Let us first recall that $h $-scaled Wigner transform associated to
any continuously parametrized family $f^h \equiv \{ f^h \}_{0\leq h \leq 1}\in L^{2}\left( \mathbb{R}^{d}\right) $ is given by, cf. \cite{wigner4, lions1993mesures,
markowich1993classical, SMM2003}:
\begin{equation*}
w^{h }[ f^h ] \left( x,\xi \right) =\frac{1}{\left( 2\pi
\right) ^{d}}\int_{\mathbb{R}^{d}}f^h\left( x-\frac{h }{2}y\right)
\overline{f^h}\left( x+\frac{h }{2}y\right) e^{i\xi \cdot y}\, dy.
\end{equation*}%
By Plancherel's Theorem and a change of variables one easily finds
\begin{equation*}
\left\Vert w^{h }[ f^h ] \right\Vert _{L^{2}\left( \mathbb{R}%
^{2d}\right) }=\frac{1}{\left( 2\pi \right) ^{\frac{d}{2}}h ^{d}}%
\left\Vert f^h\right\Vert _{L^{2}\left( \mathbb{R}^{d}\right) }^{2}.
\end{equation*}%
The real-valued function $w^h(x,\xi) $ acts as a quantum mechanical analogue for classical phase-space distributions. However, $w^h (x,\xi)\not \geq0$ in general.

It has been proved in \cite{lions1993mesures}, that if the family of functions $f^h=\{ f^h \}_{0\leq h \leq 1}$ is uniformly bounded in $L^2(\R^d)$ as $h \to 0_+$, i.e., if
\[\sup_{0< h \leq 1} \| f^h \|_{L_x^2} \leq C,\]  then
the set of Wigner functions $\{w^h\}_{0<h\leq 1} $ is uniformly bounded in $\mathcal A'$. The latter is the dual of the following Banach space
\[
\mathcal A(\R^d_x \times \R^d_\xi):= \{ \chi \in C_0(\R^d_x \times \R^d_\xi)\, : \, (\mathcal F_{\xi } \chi)(x,z) \in L^1(\R^d_z; C_0(\R^d_x)) \},
\]
where $C_0(\R^d)$ denotes the space of continuous functions vanishing at infinity and $\mathcal F_\xi$ denotes the Fourier transform with respect to the velocity $\xi$, only. 
More precisely, one finds that for any test function $\chi \in \mathcal A(\R^d_x \times \R^d_\xi)$,
\[
| \langle w^h, \chi \rangle | \leq \frac{1}{(2\pi)^d} {\| \chi \|}_{\mathcal A} {\| f^h \|}^2_{L^2} \leq {\rm const.},
\]
uniformly in $h$. Thus, up to extraction of sub-sequences $\{h_n\}_{n\in \N}$, with $h_n \to 0_+$ as $n \to \infty$, there exists a limiting object
$w^0\equiv w \in \mathcal A'(\R^d_x \times \R^d_\xi)$ such that
$$
w^h \stackrel{h\rightarrow 0_+
}{\longrightarrow}  w \quad \text{in $\mathcal A'(\R^d_x \times \R^d_\xi) {\rm w-\ast}$} .
$$
It turns out that the limit is in fact a non-negative, bounded Borel measure on phase-space $w \in \mathcal M^+(\R^d_x \times \R^d_p)$, called the {\it Wigner measure}
of $f^h$.


\subsection{The Classical limit of the SLE system}

Let $\psi ^{h }$ and $\mu ^{h }$ be the solution of the SLE system
(\ref{SL-1}) and denote the Wigner function of $\psi ^{h }\left( x,t\right) $ by
\begin{equation*}
w^{h }\left( x,\xi ,t\right) =w^{h }[ \psi ^{h }\left(
\cdot ,t\right) ] \left( x,\xi \right).
\end{equation*}%
A straightforward computation shows that the {\it position density} associated to $\psi^h\in L^2(\R^d)$ can be computed via
\begin{equation*}
{{\rho}}^{h }\left( x,t\right) :=\left\vert \psi ^{h }\left( x,t\right)
\right\vert ^{2}=\int_{\mathbb{R}^{d}}w^{h }\left( x,\xi ,t\right) d\xi,
\end{equation*}
where we recall, that due to our normalization,
\[
\int_{\R^d} {{\rho}}^{h }\left( x,t\right) \, dx = \iint_{\mathbb{R}^{2d}}w^{h }\left( x,\xi ,t\right) d\xi\, dx = 1.
\]
Moreover, by taking higher order moments in $\xi$ one (formally) finds
the {\it current density}%
\begin{equation*}
j^{h }\left( x,t\right) :=h \mathop{\rm Im}\left( \overline{\psi
^{h }\left( x,t\right) }\nabla \psi ^{h }\left( x,t\right) \right)
=\int_{\mathbb{R}^{d}}\xi w^{h }\left( x,\xi ,t\right) d\xi ,
\end{equation*}%
and the {\it kinetic energy density}%
\begin{equation*}
\kappa^{h }\left( x,t\right) :=\frac{h ^{2}}{2}\left\vert \nabla \psi
^{h }\left( x,t\right) \right\vert ^{2}=\int_{\mathbb{R}^{d}}\frac{1}{2}\left\vert \xi \right\vert ^{2} w^{h }\left( x,\xi
,t\right) d\xi .
\end{equation*}%
\begin{remark}
In order to make these computations rigorous, the integrals on the r.h.s. have to be understood in an appropriate sense, since $w^h \not \in L^1(\R^m_x\times \R^m_\xi)$ in general,
see \cite{lions1993mesures} for more details.
\end{remark}

After Wigner transforming the Schr\"odinger equation, one finds that $w^{h }\left( x,\xi ,t\right)$ satisfies the following nonlocal kinetic equation (see, e.g., \cite{lions1993mesures}):
\begin{equation*}
\partial_{t} w^{h }+\xi \cdot \nabla _{x}w^{h }+\Theta ^{h }[
\Upsilon ^{h }] w^{h}=0,\quad  w^{h }(0,x, \xi)=w_{\rm in}^{h}\left( x,\xi \right) ,
\end{equation*}
where $w_{\rm in}^{h}\equiv w^{h}[\psi^h_{\rm in}]$ and
\begin{equation}\label{theta}
\left(\Theta ^{h }[ \Upsilon ^{h }] w^{h }\right)\left( x,\xi,t\right) =
\frac{i}{h (2\pi)^d } \int_{\mathbb{R}^{d}}
\left( \Upsilon ^{h }\left( x+\frac{h }{2}z ,t\right) -\Upsilon^{h }\left( x-\frac{h }{2}z ,t\right) \right)
\widehat{w}^{h }( x,z ,t) e^{iz \cdot \xi }dz
\end{equation}
with $\widehat{w}^{h}$ denoting the Fourier transformation of $w^h$ w.r.t. the second variable only.

Now, in order to utilize the weak-$\ast$ compactness properties of the Wigner function, we shall impose from now on that the initial mass
and the initial kinetic energy are uniformly bounded with respect to $h $, i.e.,
\begin{equation*} \label{a2}
\sup\limits_{0<h \leq 1} \left({{\rho}}^{h}\left( x,0\right) +\kappa^{h }\left(x,0\right)  \right) \leq \text{const.} \tag{A2}
\end{equation*}%
\begin{remark}
In other words, we assume that
\[
\sup\limits_{0<h \leq 1} \left( \left\vert \psi_{\rm in}
^{h }(x) \right \vert^2+ \frac{h ^{2}}{2}\left\vert \nabla \psi _{\rm in}^{h } (x)\right \vert^2 \right) \leq \text{const.}
\]
This assumption is easily satisfied by initial data of WKB type, or by semi-classical wave packets.
\end{remark}

It is proved in \cite{Jin:2017bh} that these uniform bounds on the initial mass and kinetic energy are propagated by the SLE system \eqref{SL-1}, which in turn
implies that for all times $t\in \R_+$, the wave function $\psi^h(\cdot,t)$ is:
\begin{enumerate}
\item uniformly bounded in $L^{2}\left( \mathbb{R} ^{d}\right) $ as $h
\rightarrow 0^{+}$, i.e.%
\begin{equation*}
\underset{0<h \leq 1}{\sup }\left\Vert \psi^{h }(\cdot, t)\right\Vert
_{L_{x}^{2}}\leq C_{1},
\end{equation*}
\item $h $-oscillatory, i.e.%
\begin{equation*}
\underset{0<h \leq 1}{\sup }\left\Vert h \nabla _{x} \psi^{h
}(\cdot, t)\right\Vert _{L_{x}^{2}}\leq C_{2},
\end{equation*}%
where $C_{1}$ and $C_{2}$ are some constants independent of $0<h <1$.
\end{enumerate}

In particular this implies the existence of a limiting Wigner measure $\nu (\cdot, \cdot, t)\in \mathcal{M}^{+}( \mathbb{R}%
_{x}^{d}\times \mathbb{R}_{\xi }^{d}) $, such that for all $T>0$
\begin{equation*}
w^{h }\left[ \psi^{h }\right] \stackrel{h\rightarrow 0_+
}{\longrightarrow} \nu\ \ \text{in }L^\infty([0,T];\mathcal{A}^{\prime }\left( \mathbb{R}%
_{x}^{d}\times \mathbb{R}_{\xi }^{d})\right)\text{w--}\ast ,
\end{equation*}%
up to the extraction of subsequences. Moreover, on the same time-interval, one has
\begin{equation*}
\left\vert \psi ^{h }(x,t) \right\vert ^{2}\stackrel{h\rightarrow 0_+
}{\longrightarrow} \int_{\mathbb{R^d} }\nu \left( x ,\xi, t \right)  d\xi \ \text{in }%
\mathcal{M}^{+}\left( \mathbb{R}^d_x\right)\text{w--}\ast .
\end{equation*}%
Under our assumption \eqref{a1} on $V$, this can be used to prove that (see \cite{Jin:2017bh} for more details):
\begin{eqnarray*}
F^{h }\left( y,t\right) \stackrel{h\rightarrow 0_+
}{\longrightarrow}-\iint_{\R^{2d}} \nabla _{y}V\left(
x,y\right) \nu ( x, \xi, t) \, dx \, d\xi=: F^{0}\left( y,t\right) ,
\end{eqnarray*}
uniformly on compact intervals in $y$ and $t$.

Similarly, one can pass to the limit $h \to 0_+$ in the equation for $\mu^h$ to find that there exists a limiting measure
$\mu^0 \equiv \mu \in \mathcal M^+(\R^n_y\times \R^n_\eta)$ which consequently solves (in the sense of distributions):
\begin{equation}\label{liou liou 1}
\partial _{t}\mu+\eta \cdot \nabla _{y}\mu+F^{0}\left( y,t\right)
\cdot \nabla _{\eta }\mu =0.
\end{equation}
Moreover, one can prove that
\begin{eqnarray*}
\Upsilon ^{h }\left( x,t\right) \stackrel{h\rightarrow 0_+
}{\longrightarrow}\iint_{\mathbb{R}^{2n}}V\left( x,y\right) \mu\left(
y,\eta ,t\right) \, dy \, d \eta=: \Upsilon ^{0}\left( x,t\right) .
\end{eqnarray*}
In view of the definition \eqref{theta}, one also finds that
\[
\Theta ^{h }\left[ \Upsilon ^{h }\right] w^{h }\stackrel{h\rightarrow 0_+
}{\longrightarrow} -\nabla _{x}\Upsilon ^{0}\left(
x,t\right) \cdot \nabla _{\xi }\nu
\]
and thus, the Wigner measure associated to $\psi^h$ satisfies the following Liouville equation (in the sense of distributions):
\begin{equation}\label{liou liou 2}
\partial _{t}\nu+\xi \cdot \nabla _{x}\nu-\nabla _{x}\Upsilon ^{0}\left(
x,t\right) \cdot \nabla _{\xi }\nu=0.
\end{equation}
In summary, one finds a system of two coupled Liouville equations \eqref{liou liou 1}--\eqref{liou liou 2} in the classical limit
(we refer to \cite{Jin:2017bh} for a rigorous proof and further details).


\section{The spatial meshing strategy}\label{sec:mesh}

\subsection{The semi-discretized SLE system and its energy} The analysis in this section will focus on the spatial meshing strategy. In order to show that
it is possible to use a grid with $\Delta y, \Delta \eta \sim O(1)$, and thus, independent of $h$, we will
consider a semi-discretized version of the SLE system \eqref{SL-1} in one spatial dimension $d=n=1$ where
the Liouville is discretized using an upwind scheme:
\begin{equation}\label{quad-sle}
\left\{
\begin{split}
& ih \partial _{t}\psi ^{h }=-\frac{h ^{2}}{2}\partial
_{xx}\psi ^{h }+\Upsilon _{d}^{h }\left( x,t\right) \psi ^{h
},\quad \psi ^{h }(0,x)=\psi _{\rm in}^{h }(x) ,   \\
& \partial _{t}\mu ^{h }+\eta D_{y}\mu ^{h }+F^{h }\left(
y,t\right) D_{\eta }\mu ^{h }=0,\quad  \mu ^{h }(0,y, \eta)=\mu
_{\rm in}^{h } (y,\eta).
\end{split}
\right.
\end{equation}
Here, $\Upsilon _{d}^{h }\left( x,t\right) $ stands for the trapezoidal
quadrature approximation of $\Upsilon ^{h }\left( x,t\right) $, as before, whereas
$F^{h }\left( y,t\right) $ includes
exact derivative of the known function $V\left( x,y\right) $. We shall refer to \eqref{quad-sle} as the semi-discretized SLE system (s-SLE)
and show that it yields the ``correct" classical limit, i.e., the semi-discretized version of \eqref{liou liou 1}--\eqref{liou liou 2}.

Before doing so, we will need to prove an a-priori estimate and the energy associated to \eqref{quad-sle}. To this end, we
define the semi-discrete energy as
\begin{eqnarray*}
E_{d}\left( t\right) :=\int_{\mathbb{R}}\frac{h ^{2}}{2}\left\vert
\partial _{x}\psi ^{h }(x,t)\right\vert ^{2}dx+\int_{\mathbb{R}}\Upsilon
_{d}^{h }\left( x,t\right) \left\vert \psi ^{h }\right\vert
^{2}dx+\sum\limits_{j=0}^{J-1}\sum\limits_{k=0}^{K-1}\frac{\eta _{k}^{2}}{2}%
\mu _{jk}^{h }\Delta y\Delta \eta.
\end{eqnarray*}%
Here, and in the following, because of the periodicity of $\mu $, we shall use a cyclic index for $\mu _{jk}$,
such that $\mu _{jk}=\mu _{j+J,k}=\mu _{j-J,k}=\mu _{j,k+K}=\mu _{j,k-K}$.

\begin{theorem}\label{thm:energy}
\label{energy estimate} Under the assumptions \eqref{a1} and \eqref{a2}, the energy $E_{d}(t)$ is bounded by a constant independent of $h $ for
all $t\geq 0$.
\end{theorem}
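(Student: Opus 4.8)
The plan is to show that $E_d(t)$ is a sum of three nonnegative quantities whose time derivative is controlled by $E_d(t)$ itself, and then to close the argument with Gr\"onwall's inequality. First I would record the structural facts that make $E_d$ nonnegative and its pieces comparable to it. The semi-discrete upwind ODE system for $\mu^h$ in \eqref{quad-sle} has nonnegative off-diagonal coefficients (Metzler type), hence propagates positivity: $\mu_{\rm in}\ge 0$ implies $\mu_{jk}^h(t)\ge 0$ for all $t\ge 0$. Combined with $V\ge 0$ from \eqref{a1}, this gives $\Upsilon_d^h\ge 0$, so each of the three terms of $E_d$ is nonnegative; in particular $\int_{\mathbb R}\frac{h^2}{2}|\partial_x\psi^h|^2\,dx\le E_d(t)$ and the discrete kinetic term $E_L(t):=\sum_{j,k}\frac{\eta_k^2}{2}\mu_{jk}^h\Delta y\Delta\eta\le E_d(t)$. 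I would also invoke the two conservation laws of Theorem \ref{mass cons}: $\|\psi^h(\cdot,t)\|_{L^2}^2=1$ and $\sum_{j,k}\mu_{jk}^h\Delta y\Delta\eta=\mathrm{const}$, both independent of $h$. Writing $E_d=E_S+E_L$, where $E_S$ collects the Schr\"odinger kinetic and potential terms, I then estimate the two rates separately.

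For the Schr\"odinger part, since $\Upsilon_d^h$ is a real, time-dependent multiplication potential and $H(t)=-\frac{h^2}{2}\partial_{xx}+\Upsilon_d^h$ is self-adjoint, the standard computation (the kinetic contribution cancelling by self-adjointness) gives $\frac{d}{dt}E_S=\int_{\mathbb R}\partial_t\Upsilon_d^h\,|\psi^h|^2\,dx$. I would write $\partial_t\Upsilon_d^h=\sum_{j,k}V(x,y_j)\partial_t\mu_{jk}^h\Delta y\Delta\eta$, insert the Liouville equation for $\partial_t\mu_{jk}^h$, and perform discrete summation by parts using the cyclic indexing. The $\eta$-advection contribution telescopes to zero, because the weight $G_j:=\int V(x,y_j)|\psi^h|^2\,dx$ and the force $F_j^h$ are independent of $k$; the $y$-advection contribution moves the difference onto $G_j$, producing factors $G_{j+1}-G_j$ with $|G_{j+1}-G_j|\le\Delta y\,\|\partial_yV\|_{L^\infty}$ (using $\|\psi^h\|_{L^2}^2=1$). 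Together with the bound $|\eta_k^{\pm}|\le|\eta_k|$ on the upwind fluxes, this yields $\big|\frac{d}{dt}E_S\big|\le C\sum_{j,k}|\eta_k|\,\mu_{jk}^h\,\Delta y\Delta\eta$ with $C=2\|\partial_yV\|_{L^\infty}$.

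For the Liouville part I insert $\partial_t\mu_{jk}^h$ into $\frac{d}{dt}E_L$ and sum by parts in $\eta$ (now the $y$-advection term telescopes, since $\tfrac{\eta_k^2}{2}$ is independent of $j$). Differencing the quadratic weight reproduces the transport speed at cell interfaces, $\frac{\eta_k^2-\eta_{k-1}^2}{2\Delta\eta}=\tfrac{\eta_k+\eta_{k-1}}{2}=\eta_{k-1/2}$, so $\frac{d}{dt}E_L$ is a sum of the bounded weight $\eta_{k\pm1/2}$ against the upwind $\eta$-flux, whose coefficients obey $|F_j^h|\le\|\partial_yV\|_{L^\infty}$; hence $\big|\frac{d}{dt}E_L\big|\le C\sum_{j,k}|\eta_k|\,\mu_{jk}^h\,\Delta y\Delta\eta$ as well. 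A Cauchy--Schwarz step with the nonnegative weight $\mu^h$ gives $\sum_{j,k}|\eta_k|\mu_{jk}^h\Delta y\Delta\eta\le(2E_L)^{1/2}\big(\sum_{j,k}\mu_{jk}^h\Delta y\Delta\eta\big)^{1/2}\le C(1+E_d)$, using conservation of the discrete mass. Adding the two estimates produces the differential inequality $\frac{d}{dt}E_d\le C(1+E_d)$, with $C$ depending only on $\|\partial_yV\|_{L^\infty}$ and the ($h$-independent) mass. Gr\"onwall then bounds $E_d(t)$ on any finite interval by a constant independent of $h$, while the initial value $E_d(0)$ is $h$-independent by \eqref{a2} (initial kinetic energy) and \eqref{a1}.

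I expect the main obstacle to be the bookkeeping in the two summations by parts: correctly verifying that the cross-advection terms telescope under the cyclic indexing, and that differencing the quadratic weight $\tfrac{\eta_k^2}{2}$ reproduces the interface speed $\eta_{k\pm1/2}$. This is precisely the discrete counterpart of the identity $\frac{d}{dt}\int\frac{\eta^2}{2}\mu=\int F\eta\mu$ that makes the continuous system \eqref{liou liou 1}--\eqref{liou liou 2} conserve energy exactly. A secondary subtlety, worth noting explicitly, is that the force $F_j^h$ uses the \emph{exact} $y$-derivative of $V$ whereas the Ehrenfest potential enters $E_S$ through the finite difference $G_{j+1}-G_j$; this $O(\Delta y)$ mismatch obstructs exact cancellation, which is why the argument settles for a Gr\"onwall bound rather than strict conservation.
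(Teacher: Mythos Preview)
Your proposal is correct and follows essentially the same route as the paper's proof: the same decomposition (your $\tfrac{d}{dt}E_S$ is the paper's $(\text{I})+(\text{II})+(\text{III})$ and your $\tfrac{d}{dt}E_L$ is its $(\text{IV})$), the same summation-by-parts arguments transferring differences onto $G_j$ and onto $\tfrac{\eta_k^2}{2}$, the same Cauchy--Schwarz step, and the same Gr\"onwall closure. One minor slip worth fixing: since $|\eta_{k\pm1/2}|\le|\eta_k|+\tfrac{\Delta\eta}{2}$ rather than $C|\eta_k|$, your bound on $\tfrac{d}{dt}E_L$ should carry an extra constant term $\tfrac{\Delta\eta}{2}\sum_{j,k}|F_j^h|\mu_{jk}^h\Delta y\Delta\eta$ (exactly as in the paper), but this is harmless since it is already absorbed by the $+1$ in your final inequality $\tfrac{d}{dt}E_d\le C(1+E_d)$.
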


\begin{proof} We start by showing that the initial energy is bounded. This is easily seen from
\begin{equation*}
E_{d}\left( 0\right) =\int_{\mathbb{R}}\frac{h ^{2}}{2}\left\vert
\partial _{x}\psi _{\rm in}^{h }\right\vert ^{2}dx+\int_{\mathbb{R}%
}\Upsilon _{d}^{h }\left( x,0\right) \left\vert \psi _{\rm in}^{h
}\right\vert ^{2}dx+\sum\limits_{j=0}^{J-1}\sum\limits_{k=0}^{K-1}\left( \mu
_{\rm in}\right) _{jk}\Delta y\Delta \eta ,
\end{equation*}
where the first two integrals are clearly bounded by assumptions \eqref{a1} and \eqref{a2} and the last term is just a quadrature
approximation of $\iint \mu _{\rm in} \,dy \, d\eta =1$, and hence bounded.

Next, we compute the time-derivative of $E_d$ as
\begin{equation*}
\frac{d}{dt}E_{d} =(\text{\uppercase\expandafter{%
\romannumeral1}})+(\text{\uppercase\expandafter{\romannumeral2}})+(\text{%
\uppercase\expandafter{\romannumeral3}})+\text{(\uppercase%
\expandafter{\romannumeral4}),}\ \
\end{equation*}%
where
\begin{eqnarray*}
(\text{\uppercase\expandafter{\romannumeral1}}) &:= &\int_{\mathbb{R}
}\frac{h ^{2}}{2}\left( \partial _{x}\partial _{t}\bar{\psi}^{h
}\cdot \partial _{x}\psi ^{h }+\partial _{x}\bar{\psi}^{h }\cdot
\partial _{x}\partial _{t}\psi ^{h }\right)dx, \\
(\text{\uppercase\expandafter{\romannumeral2}}) &:= &\int_{\mathbb{R}
}\Upsilon _{d}^{h }\left( x,t\right) \left (\partial _{t}\bar{\psi}%
^{h }\psi ^{h }+\bar{\psi}^{h }\partial _{t}\psi ^{h }%
\right )dx, \\
(\text{\uppercase\expandafter{\romannumeral3}}) &:= &\int_{\mathbb{R}
}\partial _{t}\Upsilon _{d}^{h }\left( x,t\right) \left\vert \psi
^{h }\right\vert ^{2}dx, \\
(\text{\uppercase\expandafter{\romannumeral4}}) &:=
&\sum\limits_{j=0}^{J-1}\sum\limits_{k=0}^{K-1}\frac{\eta _{k}^{2}}{2}\left(
\partial _{t}\mu ^{h }\right) _{jk}\Delta y\Delta \eta .
\end{eqnarray*}
First, a straightforward calculation shows $($\uppercase\expandafter{\romannumeral1}%
$)+($\uppercase\expandafter{\romannumeral2}$)=0$, since
\begin{eqnarray*}
(\text{\uppercase\expandafter{\romannumeral1}})+(\text{\uppercase%
\expandafter{\romannumeral2}}) &=&-\int_{\mathbb{R} }\frac{h ^{2}}{2}%
\left( \partial _{t}\bar{\psi}^{h }\partial _{xx}\psi ^{h
}+\partial _{xx}\bar{\psi}^{h }\partial _{t}\psi ^{h }\right)
dx+\int_{\mathbb{R} }\Upsilon _{d}^{h }\left ( \partial _{t}\bar{\psi}%
^{h }\psi ^{h }+\bar{\psi}^{h }\partial _{t}\psi ^{h }%
\right) dx \\
&=&\int_{\mathbb{R} }\partial _{t}\bar{\psi}^{h }\left( -\frac{h
^{2}}{2}\partial _{xx}\psi ^{h }+\Upsilon _{d}^{h }\psi ^{h
}\right) +\left( -\frac{h ^{2}}{2}\partial _{xx}\bar{\psi}^{h
}+\Upsilon _{d}^{h }\bar{\psi}^{h }\right) \partial _{t}\psi
^{h }dx \\
&=&\int_{\mathbb{R} }\partial _{t}\bar{\psi}^{h }\left( ih
\partial _{t}\psi ^{h }\right) +\left( -ih \partial _{t}\bar{\psi}%
^{h }\right) \partial _{t}\psi ^{h }dx =0.
\end{eqnarray*}
For simplicity we will, from now on, denote
\begin{equation*} \label{Gj}
G_{j}^{h }(t)\equiv G^{h }\left(t, x,y_{j}\right) =\int_{\mathbb{R}%
}V\left( x,y_{j}\right) \left\vert \psi ^{h }(t,x)\right\vert ^{2}\,dx\geq 0,
\end{equation*}
as well as
\begin{equation*}
F_{j}^{h} (t)\equiv F^{h }\left( y_{j},t\right) =-\int_{\mathbb{R}%
}\partial _{y}V\left( x,y_{j}\right) \left\vert \psi ^{h }\right\vert
^{2}dx.  \label{Fj}
\end{equation*}%
A key observation is that $G^{h }$ is in
fact Lipschitz with a Lipschitz constant $L>0$ independent of $h $, since
\begin{align*}
\left\vert G_{j+1}^{h }-G_{j}^{h }\right\vert =\left\vert \int_{%
\mathbb{R}}\left[ V\left( x,y_{j+1}\right) -V\left( x,y_{j}\right) \right]
\left\vert \psi ^{h }\right\vert ^{2}dx\right\vert \leq \int_{\mathbb{R}}\left\vert \partial _{y}V\left( x,\xi \right)
\right\vert \left\vert y_{j+1}-y_{j}\right\vert \left\vert \psi ^{h
}\right\vert ^{2}dx ,
\end{align*}
for some $\xi \in \left( y_{j},y_{j+1}\right) $. Thus
\begin{align*}
\left\vert G_{j+1}^{h }-G_{j}^{h }\right\vert
\leq \left\Vert \partial _{y}V\left( x,\xi \right) \right\Vert _{L^{\infty
}}\left\Vert \psi ^{h }\right\Vert _{L^{2}}^{2}\left\vert
y_{j+1}-y_{j}\right\vert =\left\Vert \partial _{y}V\left( x,\xi \right) \right\Vert _{L^{\infty
}}\Delta y =: L\Delta y,
\end{align*}
since $\left\Vert \psi ^{h
}\right\Vert _{L^{2}}=\left\Vert \psi _{\rm in}^{h }\right\Vert _{L^{2}}=1$, in view of mass conservation established in Theorem \ref{mass cons}. In addition, we have that $F^h_j$ is uniformly bounded, i.e.
\begin{equation}
\left\vert F_{j}^{h}\right\vert =\left\vert \int_{\mathbb{R}}\partial
_{y}V\left( x,y_{j}\right) \left\vert \psi ^{h }\right\vert
^{2}dx\right\vert \leq \left\Vert \partial _{y}V\left( x,\xi \right)
\right\Vert _{L^{\infty }}\left\Vert \psi ^{h }\right\Vert
_{L^{2}}^{2}=L.  \label{f bdd}
\end{equation}%

Coming back to $($\uppercase\expandafter{\romannumeral3}$)$, we first note
\begin{eqnarray*}
\partial _{t}\Upsilon _{d}^{h }\left( x,t\right)
&=&\sum\limits_{j=0}^{J-1}\sum\limits_{k=0}^{K-1}V\left( x,y_{j}\right)
\left( \partial _{t}\mu ^{h }\right) _{jk}\Delta y\Delta \eta \\
&=&-\sum\limits_{j=0}^{J-1}\sum\limits_{k=0}^{K-1}V\left( x,y_{j}\right)
\left[ \eta _{k}\left( D_{y}\mu ^{h }\right) _{jk}+F_{j}^{h
}\left( D_{\eta }\mu ^{h }\right) _{jk}\right] \Delta y\Delta \eta \\
&=&-\sum\limits_{j=0}^{J-1}\sum\limits_{k=0}^{K-1}V\left( x,y_{j}\right)
\eta _{k}\left( D_{y}\mu ^{h }\right) _{jk}\Delta y\Delta \eta,
\end{eqnarray*}%
where the last equality follows from the fact that we have a telescoping series in $k$ with zero boundary conditions.
Recalling (\ref{Gj}), $($\uppercase\expandafter{\romannumeral3}$)$ can be written as
\begin{eqnarray*}
(\text{\uppercase\expandafter{\romannumeral3}})
&=&-\sum\limits_{j=0}^{J-1}\sum\limits_{k=0}^{K-1}G_{j}^{h }\eta
_{k}\left( D_{y}\mu ^{h }\right) _{jk}\Delta y\Delta \eta \\
&=&-\sum\limits_{j=0}^{J-1}\sum\limits_{k=0}^{K-1}G_{j}^{h }\frac{\eta
_{k}+\left\vert \eta _{k}\right\vert }{2}\left( \mu _{jk}^{h }-\mu
_{j-1,k}^{h }\right) \Delta \eta
-\sum\limits_{j=0}^{J-1}\sum\limits_{k=0}^{K-1}G_{j}^{h }\frac{\eta
_{k}-\left\vert \eta _{k}\right\vert }{2}\left( \mu _{j+1,k}^{h }-\mu
_{j,k}^{h }\right) \Delta \eta \\
&=&\sum\limits_{j=0}^{J-1}\sum\limits_{k=0}^{K-1}\frac{\eta _{k}+\left\vert
\eta _{k}\right\vert }{2}\mu _{jk}^{h }\left( G_{j+1}^{h
}-G_{j}^{h }\right) \Delta \eta
+\sum\limits_{j=0}^{J-1}\sum\limits_{k=0}^{K-1}\frac{\left\vert \eta
_{k}\right\vert -\eta _{k}}{2}\mu _{jk}^{h }\left( G_{j-1}^{h
}-G_{j}^{h }\right) \Delta \eta ,
\end{eqnarray*}
where summation by parts is used in the last equality.
In view of the Lipschitz property above, it is then straightforward to estimate $($\uppercase\expandafter{\romannumeral3}$)$ via
\begin{eqnarray*}
(\text{\uppercase\expandafter{\romannumeral3}})
\leq L\sum\limits_{j=0}^{J-1}\sum\limits_{k=0}^{K-1}\left\vert \eta
_{k}\right\vert \mu _{jk}^{h }\Delta y\Delta \eta .
\end{eqnarray*}

Similarly, one proves that
\begin{eqnarray*}
(\text{\uppercase\expandafter{\romannumeral4}})
&=&-\sum\limits_{j=0}^{J-1}\sum\limits_{k=0}^{K-1}\frac{\eta _{k}^{2}}{2}%
F_{j}^{h }\left( D_{\eta }\mu ^{h }\right) _{jk}\Delta y\Delta \eta
\\
&=&-\sum\limits_{j=0}^{J-1}\sum\limits_{k=0}^{K-1}\frac{\eta _{k}^{2}}{2}%
\frac{F_{j}^{h }+\left\vert F_{j}^{h }\right\vert }{2}\left( \mu
_{jk}^{h }-\mu _{j,k-1}^{h }\right) \Delta
y-\sum\limits_{j=0}^{J-1}\sum\limits_{k=0}^{K-1}\frac{\eta _{k}^{2}}{2}\frac{%
F_{j}^{h }-\left\vert F_{j}^{h }\right\vert }{2}\left( \mu
_{j,k+1}^{h }-\mu _{jk}^{h }\right) \Delta y
\\
&=&\sum\limits_{j=0}^{J-1}\sum\limits_{k=0}^{K-1}\frac{\eta _{k+1}^{2}-\eta
_{k}^{2}}{2}\frac{F_{j}^{h }+\left\vert F_{j}^{h }\right\vert }{2}%
\mu _{jk}^{h }\Delta y+\sum\limits_{j=0}^{J-1}\sum\limits_{k=0}^{K-1}%
\frac{\eta _{k}^{2}-\eta _{k-1}^{2}}{2}\frac{F_{j}^{h }-\left\vert
F_{j}^{h }\right\vert }{2}\mu _{jk}^{h }\Delta y
\\
&=&\sum\limits_{j=0}^{J-1}\sum\limits_{k=0}^{K-1}\left( \eta _{k}+\frac{%
\Delta \eta }{2}\right) \Delta \eta \frac{F_{j}^{h }+\left\vert
F_{j}^{h }\right\vert }{2}\mu _{jk}^{h }\Delta
y+\sum\limits_{j=0}^{J-1}\sum\limits_{k=0}^{K-1}\left( \eta _{k}-\frac{%
\Delta \eta }{2}\right) \Delta \eta \frac{F_{j}^{h }-\left\vert
F_{j}^{h }\right\vert }{2}\mu _{jk}^{h }\Delta y ,
\end{eqnarray*}
where summation by parts is used as before. Combining the coefficients of  $F_j^h$ and $\left\vert F_j^h \right \vert$ respectively, this is equal to
\begin{eqnarray*}
(\text{\uppercase\expandafter{\romannumeral4}})
&=&\sum\limits_{j=0}^{J-1}\sum\limits_{k=0}^{K-1}\eta _{k}F_{j}^{h }\mu
_{jk}^{h }\Delta y\Delta \eta
+\sum\limits_{j=0}^{J-1}\sum\limits_{k=0}^{K-1}\frac{\Delta \eta }{2}%
\left\vert F_{j}^{h }\right\vert \mu _{jk}^{h }\Delta y\Delta \eta
\\
&\leq &L\sum\limits_{j=0}^{J-1}\sum\limits_{k=0}^{K-1}\left\vert \eta
_{k}\right\vert \mu _{jk}^{h }\Delta y\Delta \eta
+L\sum\limits_{j=0}^{J-1}\sum\limits_{k=0}^{K-1}\frac{\Delta \eta }{2}\mu
_{jk}^{h }\Delta y\Delta \eta .
\end{eqnarray*}

In summary, we thus find
\begin{align*}
&\frac{d}{dt} E_{d} \leq 2L\sum\limits_{j=0}^{J-1}\sum\limits_{k=0}^{K-1}\left\vert \eta
_{k}\right\vert \mu _{jk}^{h }\Delta y\Delta \eta
+L\sum\limits_{j=0}^{J-1}\sum\limits_{k=0}^{K-1}\frac{\Delta \eta }{2}\mu
_{jk}^{h }\Delta y\Delta \eta  \notag \\
& \leq 2L\left( \sum\limits_{j=0}^{J-1}\sum\limits_{k=0}^{K-1}\eta
_{k}^{2}\mu _{jk}^{h }\Delta y\Delta \eta \right) ^{\frac{1}{2}}\left(
\sum\limits_{j=0}^{J-1}\sum\limits_{k=0}^{K-1}\mu _{jk}^{h }\Delta
y\Delta \eta \right) ^{\frac{1}{2}}+L\sum\limits_{j=0}^{J-1}\sum%
\limits_{k=0}^{K-1}\frac{\Delta \eta }{2}\mu _{jk}^{h }\Delta y\Delta \eta .
\end{align*}
Using the fact that
$$\sum\limits_{j=0}^{J-1}\sum\limits_{k=0}^{K-1}%
\mu _{jk}^{h }\Delta y\Delta \eta =C$$ is a conserved quantity with respect to time, we consequently find the following estimate
\begin{eqnarray*}
\frac{d}{dt} E_{d}\left( t\right) &\leq &2^{\frac{3}{2}}LC^{\frac{1}{2}}\sqrt{E_{d}\left( t\right) }+\frac{LC}{%
2}\Delta \eta \\
&\leq &E_{d}\left( t\right) +2L^{2}C+\frac{LC}{2}\Delta \eta \equiv
E_{d}\left( t\right) +C_{1}.
\end{eqnarray*}
By Gronwall's inequality, this yields
\begin{equation*}
E_{d}\left( t\right) \leq \left( C_{1}+E_d(0)\right) e^{t}-C_{1}\text{,}
\end{equation*}%
which gives the desired bound independent of $h $.
\end{proof}

\begin{remark} It is easy to find a sharper bound of the energy by considering times $t\leq e$ and $t> e$, respectively, but the estimate above is sufficient for our purposes.
\end{remark}


\subsection{The classical limit of the s-SLE system}

In this section, we shall perform the limit $h \rightarrow 0^{+}$ of the s-SLE system \eqref{quad-sle}.
By proving that it converges, as $h \to 0_+$, to the semi-discretized version of the coupled Liouville-system  \eqref{liou liou 1}--\eqref{liou liou 2},
we infer that it is possible to choose a spatial meshing strategy such that $\Delta y, \Delta \eta \sim \mathcal O(1)$.

To this end, we first note that the a-priori bounds on the mass and energy obtained in Theorems \ref{mass cons} and \ref{thm:energy},
together with our assumptions on $V\geq 0$ imply that the solution $\psi ^{h }$ of \eqref{quad-sle} is
uniformly bounded in $L^{2}\left(\mathbb{R}\right) $, and $h -$oscillatory. Thus, there exists an associated
Wigner measure $\nu(,\cdot, \cdot, t)\in \mathcal{M}^{+}( \mathbb{R}%
_{x}\times \mathbb{R}_{\xi })$ and we directly infer that
\begin{eqnarray*}
F^{h }\left( y,t\right) \stackrel{h\rightarrow 0_+
}{\longrightarrow}-\iint_{\R^{2}} \partial _{y}V\left(
x,y\right) \nu (t, x, \xi) \, dx \, d\xi=: F^{0}\left( y,t\right) ,
\end{eqnarray*}
by the same arguments as in \cite{Jin:2017bh} (recall that $x,t$ are taken to be continuous in \eqref{quad-sle}). In the following we shall use the short-hand notation
$F^h_j(t)$ and $F_j^0(t)$, respectively, as given in \eqref{Fj}.

\subsection{Convergence of $\mu ^{h }$}

Next, we turn to the solution $\mu ^{h }$ within \eqref{quad-sle}, which we recall to be discretized via an upwind scheme. We shall prove the following result about its limiting behavior as $h\to 0_+$. In the following, $C_{\rm b}(\mathbb R)$ denotes the space of continuous and bounded functions on $\mathbb R$.

\begin{proposition}
Let$\ \mu _{jk}^{h }\left( t\right) \in C_{\rm b}\left( \mathbb{R}%
_{t}\right) $ be a solution of
\begin{equation*}
\frac{d}{dt}\mu _{jk}^{h }\left( t\right) =-\eta _{k}D_{y}\mu
_{jk}^{h }-F_{j}^{h }\left( t\right) D_{\eta }\mu _{jk}^{h },
\end{equation*}%
and $\mu _{jk}\left( t\right) \in C_{\rm b}\left( \mathbb{R}_{t}\right) $ be
a solution of%
\begin{equation*}
\frac{d}{dt}\mu _{jk}\left( t\right) =-\eta _{k}D_{y}\mu _{jk}\left(
t\right) -F_{j}^{0}\left( t\right) D_{\eta }\mu _{jk}\left( t\right) ,
\end{equation*}%
where $j=0,\cdots ,J-1$ and $k=0,\cdots ,K-1$, such that initially $\mu
_{jk}^{h }\left( 0\right) =\mu _{jk}\left( 0\right) $. Then for any
given $T>0$, %
\begin{equation*}
\mu _{jk}^{h }\stackrel{h\rightarrow 0_+
}{\longrightarrow} \mu _{jk}^0\equiv \mu_{jk}\text{, as }h \rightarrow
0_{+}\ \text{in }L^{\infty } \left[ 0,T\right] \text{,}
\end{equation*}%
up to the extraction of subsequences.
\end{proposition}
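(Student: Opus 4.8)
The plan is to view the two systems as finite-dimensional linear ODEs for the vector of grid values and to deduce convergence from a Grönwall-type stability estimate, the only genuinely $h$-dependent input being the convergence $F_j^h\to F_j^0$ established above. Since $j\in\{0,\dots,J-1\}$ and $k\in\{0,\dots,K-1\}$ range over a fixed finite set (recall $\Delta y,\Delta\eta\sim\mathcal O(1)$ are independent of $h$), the unknowns live in a fixed finite-dimensional space, so all norms are equivalent; I will work with $\|\mu\|_1:=\sum_{j,k}|\mu_{jk}|\,\Delta y\,\Delta\eta$ and note that convergence in this norm uniformly on $[0,T]$ is exactly the asserted $L^\infty[0,T]$ convergence of each component.

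First I would set $e_{jk}^h(t):=\mu_{jk}^h(t)-\mu_{jk}(t)$ and subtract the two equations. The transport part contributes $-\eta_k(D_y e^h)_{jk}$, since the $D_y$-upwinding is built from the fixed velocities $\eta_k$ and is therefore identical for both solutions. For the force part I would write the upwind flux as $\Phi_{jk}(F,\mu)=\max(F,0)\frac{\mu_{jk}-\mu_{j,k-1}}{\Delta\eta}+\min(F,0)\frac{\mu_{j,k+1}-\mu_{jk}}{\Delta\eta}$, which is linear in $\mu$ for each fixed value $F$, and perform the add-and-subtract splitting $\Phi_{jk}(F_j^h,\mu^h)-\Phi_{jk}(F_j^0,\mu)=\Phi_{jk}(F_j^h,e^h)+R_{jk}^h$, with residual $R_{jk}^h:=\Phi_{jk}(F_j^h,\mu)-\Phi_{jk}(F_j^0,\mu)$. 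Thus $e^h$ solves a linear system $\frac{d}{dt}e^h=A^h(t)e^h-R^h(t)$, where the operator $A^h(t)\colon e\mapsto -\eta_k(D_y e)_{jk}-\Phi_{jk}(F_j^h(t),e)$ has, by the uniform bound $|F_j^h|\le L$ from \eqref{f bdd} and the fixed mesh sizes, an operator norm bounded by some $K=K(\max_k|\eta_k|,L,\Delta y,\Delta\eta)$ independent of $h$ and of $t\in[0,T]$.

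It then remains to show the residual is small and to close with Grönwall. Since $F\mapsto\max(F,0)$ and $F\mapsto\min(F,0)$ are $1$-Lipschitz, one gets $|R_{jk}^h(t)|\le|F_j^h(t)-F_j^0(t)|\big(|\mu_{jk}-\mu_{j,k-1}|+|\mu_{j,k+1}-\mu_{jk}|\big)/\Delta\eta$; summing over the finite grid yields $\|R^h(t)\|_1\le C(\Delta y,\Delta\eta)\max_j|F_j^h(t)-F_j^0(t)|\sup_{[0,T]}\|\mu(t)\|_1$. Here $\sup_{[0,T]}\|\mu\|_1$ is finite because $\mu_{jk}$ solves a fixed linear ODE with coefficients bounded on $[0,T]$, while $\max_j\sup_{[0,T]}|F_j^h-F_j^0|\to0$ because there are only finitely many indices $j$ and each $F_j^h\to F_j^0$ uniformly on $[0,T]$ along the subsequence furnished by the Wigner-measure analysis; hence $\delta_h:=\sup_{[0,T]}\|R^h(t)\|_1\to0$. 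Using $e^h(0)=0$ (matching initial data), the variation-of-constants estimate gives $\|e^h(t)\|_1\le\int_0^t e^{K(t-s)}\|R^h(s)\|_1\,ds\le\delta_h(e^{KT}-1)/K\to0$ uniformly on $[0,T]$, which is the claim.

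I expect the only real subtlety to be the force flux: because its upwind direction is dictated by the sign of $F_j^h$, one cannot simply subtract the two equations into a single linear operator acting on $e^h$, and it is precisely the add-and-subtract splitting together with the Lipschitz continuity of $F\mapsto\max(F,0),\min(F,0)$ that turns the force perturbation into a residual vanishing as $h\to0_+$. Everything else is a textbook linear-ODE perturbation argument, rendered clean by the finite dimensionality of the semi-discrete $\mu$-system.
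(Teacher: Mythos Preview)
Your proposal is correct and follows essentially the same approach as the paper: both proofs form the difference $e^h=\mu^h-\mu$, perform the same add-and-subtract splitting so that the linear part carries the $F_j^h$-upwind stencil acting on $e^h$ while the source term involves $\bigl(\max(F_j^h,0)-\max(F_j^0,0)\bigr)$ and $\bigl(\min(F_j^h,0)-\min(F_j^0,0)\bigr)$ applied to the fixed $\mu$, and then conclude via a Duhamel/Gr\"onwall argument using the $h$-uniform bound on the operator and the convergence $F_j^h\to F_j^0$. The only cosmetic differences are your use of the $\ell^1$ grid norm versus the paper's $\ell^\infty$ norm (equivalent in finite dimensions, as you note) and your direct Gr\"onwall estimate versus the paper's propagator bound $\|S^h(t,s)\|_\infty\le C_3$.
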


\begin{proof}
Denote the difference between $\mu _{jk}^{h }$ and its limit by
\[
e_{jk}^{h}\left( t\right) =\mu _{jk}^{h }(t)-\mu _{jk}(t),
\]
which solves the following system of equations:
\begin{eqnarray*}
\frac{d}{dt}e_{jk}^{h }\left( t\right) &=&-\eta _{k}D_{y}e_{jk}^{h
}-F_{j}^{h }\left( t\right) D_{\eta }e_{jk}^{h }+F_{j}^{0}\left(
t\right) D_{\eta }\mu _{jk}-F_{j}^{h }\left( t\right) D_{\eta }\mu
_{jk} \\
&=&-\frac{1}{2 \Delta y}(\eta _{k}+\left\vert \eta _{k}\right\vert)\left(
e_{jk}^{h }-e_{j-1,k}^{h }\right) -\frac{1}{2 \Delta y}(\eta _{k}-\left\vert \eta
_{k}\right\vert )\left( e_{j+1,k}^{h }-e_{jk}^{h }\right) \\
&&-\frac{1}{2 \Delta \eta}(F_{j}^{h }\left( t\right) +\left\vert F_{j}^{h }\left(
t\right) \right\vert )\left( e_{jk}^{h }-e_{j,k-1}^{h }\right) -%
\frac{1}{2 \Delta \eta}(F_{j}^{h }\left( t\right) -\left\vert F_{j}^{h }\left(
t\right) \right\vert )\left( e_{j,k+1}^{h }-e_{jk}^{h }\right)
\\
&&+\frac{1}{2 \Delta \eta}(F_{j}^{0}\left( t\right) +\left\vert F_{j}^{0}\left( t\right)
\right\vert -F_{j}^{h }\left( t\right) -\left\vert F_{j}^{h
}\left( t\right) \right\vert )\left( \mu _{jk}-\mu _{j,k-1}\right)
\\
&&+\frac{1}{2 \Delta \eta}(F_{j}^{0}\left( t\right) -\left\vert F_{j}^{0}\left( t\right)
\right\vert -F_{j}^{h }\left( t\right) +\left\vert F_{j}^{h
}\left( t\right) \right\vert )\left( \mu _{j,k+1}-\mu _{jk}\right),
\end{eqnarray*}
subject to initial data $e_{jk}^{h}\left( 0 \right)=0$, since $\mu _{jk}^{h }(0)=\mu _{jk}(0)$.
For simplicity we shall write the system above in vector form, i.e.,
\begin{equation}\label{ode}
\frac{d}{dt}E^{h} (t)=A^{h }\left( t\right) E^{h
}\left( t\right) +b^{h }\left( t\right) ,\ E^{h }\left( 0\right)
=0.
\end{equation}%
Here $E^{h }\left( t\right) $ and $b^{h }\left( t\right) $ are
both $JK$-dimensional vectors, $A^{h }\left( t\right) $
is a continuous $JK \times JK$ matrix-valued function of $t\in \left( 0,T%
\right] $, and
\begin{eqnarray*}
b_{\left( j-1\right) K+k}^{h }\left( t\right) &=&\frac{1}{2 }(F_{j}^{0}\left(
t\right) +\left\vert F_{j}^{0}\left( t\right) \right\vert -F_{j}^{h
}\left( t\right) -\left\vert F_{j}^{h }\left( t\right) \right\vert )%
\frac{ \mu _{jk}-\mu _{j,k-1}}{\Delta y} \\
&&+\frac{1}{2}(F_{j}^{0}\left( t\right) -\left\vert F_{j}^{0}\left( t\right)
\right\vert -F_{j}^{h }\left( t\right) +\left\vert F_{j}^{h
}\left( t\right) \right\vert )\frac{ \mu _{j,k+1}-\mu
_{jk}}{\Delta \eta} ,
\end{eqnarray*}%
for $j=1,\ldots ,J$, and $k=1,\ldots ,K$. Clearly, $\left\Vert A^{h }\left( t\right) \right\Vert
_{\infty }\leq C$, where $C$ is a constant independent of $h $, due to
the fact that\ $\left\vert F_{j}^{h }\left( t\right) \right\vert $ and $%
\left\vert F_{j}^{0}\left( t\right) \right\vert $ are both bounded by some
constants independent of $h $, see the proof of Theorem \ref{thm:energy}. Classical ODE theory then implies (see, e.g., \cite{taylor1996partial}), that
there is a matrix-valued function $%
S^{h }\left( t,s\right) $ such that the solution of \eqref{ode} is given by Duhamel's principle:
\begin{eqnarray*}
E^{h }\left( t\right) &=&S^{h }\left( t,0\right) E^{h }\left(
0\right) +\int_{0}^{t}S^{h }\left( t,s\right) b^{h }\left(
s\right) ds \\
&=&\int_{0}^{t}S^{h }\left( t,s\right) b^{h }\left( s\right) ds.
\end{eqnarray*}%
Moreover, there exists a bound on the propagator $S$ of the
\begin{equation}\label{c3}
\left\Vert S^{h }\left( t,s\right) \right\Vert _{\infty }\leq C_{3},
\end{equation}
where $C_{3}$ is a constant independent of $h $.

Next, we recall that $F_{j}^{h }\left( t\right) =F^{h }\left( y_{j},t\right)$ is uniformly bounded, by equation (\ref{f bdd}), and
\begin{equation*}
F_{j}^{h }\left( t\right)  \stackrel{h\rightarrow 0_+
}{\longrightarrow}
F_{j}^{0}\left( t\right) =F^{0}\left( y_{j},t\right) \text{, as }h
\rightarrow 0^{+},
\end{equation*}%
pointwise (up to the extraction of subsequences). In addition, $F_{j}^{h }\left(
t\right) $ is easily seen to be equi-continuous in time, by the same type of argument as in \cite{Jin:2017bh}. Namely, by using Schr\"odinger's equation, one finds
\begin{eqnarray*}
\left\vert \partial _{t}F_{j}^{h }\left( t\right) \right\vert
&=&\left\vert \int_{\mathbb{R} }\partial _{y}V\left( x,y_{j}\right) \left(
\partial _{t}\bar{\psi}^{h }\psi ^{h }+\bar{\psi}^{h
}\partial _{t}\psi ^{h }\right) dx\right\vert
\\
&=&\left\vert \int_{\mathbb{R} }\frac{ih }{2}\partial _{y}V\left(
x,y_{j}\right) \left( \bar{\psi}^{h }\partial _{xx}\psi ^{h
}-\partial _{xx}\bar{\psi}^{h }\psi ^{h }\right) dx\right\vert .
\end{eqnarray*}
Integrating by parts, it reads
\begin{eqnarray*}
\left\vert \partial _{t}F_{j}^{h }\left( t\right) \right\vert
&=&\frac{h }{2}\left\vert \int_{\mathbb{R} }\partial _{x}\left(
\partial _{y}V\left( x,y_{j}\right) \bar{\psi}^{h }\right) \partial
_{x}\psi ^{h }-\partial _{x}\bar{\psi}^{h }\partial _{x}\left(
\partial _{y}V\left( x,y_{j}\right) \psi ^{h }\right) dx\right\vert \\
&=&\frac{h }{2}\left\vert \int_{\mathbb{R} }\partial _{xy}V\left(
x,y_{j}\right) \left(\bar{\psi}^{h }\partial _{x}\psi ^{h
}-\partial _{x}\bar{\psi}^{h }\psi \right )dx\right\vert \\
&\leq &h \left\Vert \partial _{xy}V\left( x,y\right) \right\Vert
_{L^{\infty }\left( \mathbb{R}^{2}\right) }\| \psi^h \|_{L^2(\R)} \| \partial_x \psi^h \|_{L^2(\R)}\leq C\left( t\right) \text{,}
\end{eqnarray*}%
where the last inequality follows from the $h-$oscillatory nature of $\psi^h$.

This consequently implies that
\begin{equation*}
b^{h }\left( t\right) \stackrel{h\rightarrow 0_+
}{\longrightarrow} 0\text{, as }h \rightarrow 0_{+}%
\text{,}
\end{equation*}%
locally uniformly in $t$, up to extraction of some subsequence, which in turn yields
convergence of
$E^{h }\left( t\right) $ itself, as can be seen by considering its $m$-th component, for $m=1,\dots ,JK$:
\begin{eqnarray*}
\left\vert E_{m}^{h }\left( t\right) \right\vert &=&\left\vert
\int_{0}^{t}\sum\limits_{n=1}^{JK}S_{mn}^{h }\left( t,s\right)
b_{n}^{h }\left( s\right) ds\right\vert \\
&\leq &C_{3}\sum\limits_{n=1}^{JK}\int_{0}^{t}\left\vert b_{n}^{h
}\left( s\right) \right\vert ds\rightarrow 0\text{, as }h \rightarrow
0_{+},
\end{eqnarray*}
where we have used \eqref{c3}.
\end{proof}

\subsection{Equation for $\nu $ and the main result}

With the convergence theorem of $\mu^h$ in hand, we can now state the following result, which represents the final step in our analysis.

\begin{proposition}
Assume \eqref{a1} and \eqref{a2} and let $\Theta [ \Upsilon _{d}^{h }]$ be the pseudo-differential operator defined in \eqref{theta} applied to the trapezoidal
quadrature approximation of $\Upsilon ^{h }\left( x,t\right) $. Then it holds
\begin{equation*}
\Theta [ \Upsilon _{d}^{h }] w^{h }\left( x,\xi
,t\right) \stackrel{h\rightarrow 0_+
}{\longrightarrow} -\partial _{x}\Upsilon _{d}^{0}\left( x,t\right)
\partial _{\xi }\nu \left( x,\xi ,t\right) \text{\ \ in \thinspace }%
L^{\infty }\left([0,T];\mathcal{A}^{\prime }\left( \mathbb{R}%
_{x}\times \mathbb{R}_{\xi }\right) \rm w\text{--}\ast\right) \text{,}
\end{equation*}
where
\begin{equation*}
\Upsilon _{d}^{0}\left( x,t\right)
=\sum\limits_{j=0}^{J-1}\sum\limits_{k=0}^{K-1}V\left( x,y_{j}\right) \mu
_{jk}^{0}\Delta y\Delta \eta .
\end{equation*}
\end{proposition}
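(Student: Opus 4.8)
The plan is to reduce the statement to the classical Wigner limit already employed in the continuous setting of \cite{Jin:2017bh}, exploiting that the quadrature potential $\Upsilon_d^h$ enjoys exactly the same $x$-regularity as $\Upsilon^h$. Two preliminaries drive the argument. First, by Theorems \ref{mass cons} and \ref{thm:energy} together with \eqref{a2}, the family $\psi^h$ is uniformly bounded in $L^2(\mathbb R)$ and $h$-oscillatory, so its Wigner transform converges $w^h\to\nu$ in $L^\infty([0,T];\mathcal A'\,\text{w--}\ast)$ (up to subsequences), with a uniform bound $\|w^h\|_{\mathcal A'}\le C$. Second, since $\Upsilon_d^h(x,t)=\sum_{j,k}V(x,y_j)\mu_{jk}^h(t)\,\Delta y\Delta\eta$ is a \emph{finite} linear combination, over the fixed grid, of the $C_0^2$-functions $x\mapsto V(x,y_j)$, the previous proposition ($\mu_{jk}^h\to\mu_{jk}$ in $L^\infty[0,T]$) immediately gives
\begin{equation*}
\|\Upsilon_d^h(\cdot,t)-\Upsilon_d^0(\cdot,t)\|_{L^\infty}+\|\partial_x\Upsilon_d^h(\cdot,t)-\partial_x\Upsilon_d^0(\cdot,t)\|_{L^\infty}\le \|V\|_{C^1}\sum_{j,k}|\mu_{jk}^h(t)-\mu_{jk}(t)|\,\Delta y\Delta\eta \longrightarrow 0,
\end{equation*}
uniformly in $t\in[0,T]$. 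In particular $\Upsilon_d^0\in C_0^2$ in $x$, so $\partial_x\Upsilon_d^0$ is a bounded continuous multiplier and the right-hand side $-\partial_x\Upsilon_d^0\,\partial_\xi\nu$ is well-defined in $\mathcal A'$.

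With this in hand I would split $\Theta[\Upsilon_d^h]w^h=\Theta[\Upsilon_d^0]w^h+\Theta[\Upsilon_d^h-\Upsilon_d^0]w^h$ and treat the two pieces separately. For the main term the symbol $\Upsilon_d^0$ is \emph{fixed} (independent of $h$), so this is precisely the standard semiclassical limit of the Wigner pseudo-differential operator \eqref{theta}. Using the representation $\tfrac1h\big(U(x+\tfrac h2 z)-U(x-\tfrac h2 z)\big)=\tfrac z2\int_{-1}^1\partial_x U(x+\tfrac{hz}{2}\tau)\,d\tau$, the symbol of $\Theta[\Upsilon_d^0]$ converges pointwise to $z\,\partial_x\Upsilon_d^0(x)$ and is dominated by $|z|\,\|\partial_x\Upsilon_d^0\|_{L^\infty}$. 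Testing against $\chi\in\mathcal A$, transposing the operator onto $\chi$, and invoking dominated convergence in the $\mathcal A$-norm yields strong convergence of the transposed test function to $\partial_x\Upsilon_d^0\,\partial_\xi\chi$; combined with $w^h\to\nu$ weak--$\ast$ and the uniform bound $\|w^h\|_{\mathcal A'}\le C$, this gives $\Theta[\Upsilon_d^0]w^h\to -\partial_x\Upsilon_d^0\,\partial_\xi\nu$ in $\mathcal A'$ w--$\ast$, exactly as in \cite{Jin:2017bh,lions1993mesures}.

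For the remainder term I set $R^h:=\Upsilon_d^h-\Upsilon_d^0$ and use the same representation to write its symbol as $\tfrac z2\int_{-1}^1\partial_x R^h(x+\tfrac{hz}{2}\tau)\,d\tau$, which is bounded by $|z|\,\|\partial_x R^h(\cdot,t)\|_{L^\infty}$. Since $\widehat{w}^h(x,z)=\psi^h(x-\tfrac h2 z)\overline{\psi^h(x+\tfrac h2 z)}$, pairing against $\chi\in\mathcal A$ and applying Cauchy--Schwarz in $x$ (using $\|\psi^h\|_{L^2}=1$ from Theorem \ref{mass cons}) bounds $|\langle\Theta[R^h]w^h,\chi\rangle|$ by $C(\chi)\,\|\partial_x R^h(\cdot,t)\|_{L^\infty}$, uniformly in $h$. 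By the preliminary estimate above this tends to $0$ uniformly on $[0,T]$, so the remainder vanishes in the limit.

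I expect the genuine obstacle to lie in the main term rather than the remainder: one must pass to the limit in a product of an $h$-dependent operator with a merely weak--$\ast$ convergent family $w^h$. The clean route is to transpose $\Theta[\Upsilon_d^0]$ onto the test function and prove \emph{strong} $\mathcal A$-convergence of the transposed symbol to $z\,\partial_x\Upsilon_d^0(x)$, which requires an $L^1_z(C_0^x)$-type dominated convergence; if needed one first restricts to a dense subclass of smoother $\chi$ (controlling the extra factor $|z|$) and then extends using the uniform $\mathcal A'$-bound on $w^h$. A secondary point is uniformity in $t$: it is immediate for the remainder by the previous proposition, and for the main term it follows from the $t$-independence of the limiting symbol together with the equi-boundedness of $\Upsilon_d^0$ and $\partial_x\Upsilon_d^0$ on $[0,T]$.
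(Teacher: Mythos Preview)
Your proposal is correct and follows essentially the same route as the paper, which simply invokes the argument of Lemma~4.5 in \cite{Jin:2017bh}; your splitting $\Theta[\Upsilon_d^h]w^h=\Theta[\Upsilon_d^0]w^h+\Theta[R^h]w^h$, with the main term handled by transposing onto the test function and the remainder killed via the uniform convergence $\mu_{jk}^h\to\mu_{jk}$, is precisely that argument spelled out. One minor slip: you write ``$t$-independence of the limiting symbol'' near the end, but $\Upsilon_d^0(x,t)$ does depend on $t$ through $\mu_{jk}(t)$; what you need (and effectively use) is its $h$-independence together with uniform-in-$t$ bounds on $\Upsilon_d^0$ and $\partial_x\Upsilon_d^0$, which follow from \eqref{a1} and the boundedness of $\mu_{jk}\in C_{\rm b}[0,T]$.
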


The proof of this proposition follows from the same arguments as given in the proof of Lemma 4.5 in
\cite{Jin:2017bh}, and we therefore omit it here.

We are now in the position to state the main result of this section.

\begin{theorem}
Let Assumption \eqref{a1} and \eqref{a2} hold. Then, for any $T>0$, the solution of semi-discretized SLE system \eqref{quad-sle} satisfies, up to extraction of sub-sequences,
\begin{equation*}
w^{h }[ \psi ^{h }]  \stackrel{h\rightarrow 0_+
}{\longrightarrow}
\nu\text{ in }L^\infty([0,T];\mathcal{A}^{\prime }\left( \mathbb{R}_{x}\times
\mathbb{R}_{\xi }\right))\, {\rm w}\text{--}\ast ,\quad \mu _{jk}^{h } \stackrel{h\rightarrow 0_+
}{\longrightarrow} \mu
_{jk}^{0}\text{ in }L^{\infty } \left[ 0,T\right] ,
\end{equation*}
where $j=0,\cdots ,J-1$ and $k=0,\cdots ,K-1$. In addition, $\nu $ and $\mu _{jk}$
solve the semi-discretized Liouville-system
\begin{equation*}
\left\{
\begin{split}
&\, \partial _{t}\nu +\xi \partial _{x}\nu -\partial _{x}\Upsilon _{d}^{0}\left(
x,t\right) \partial _{\xi }\nu =0, \\
&\, \frac{d}{dt}\mu _{jk}^{0}+ \eta _{k}D_{y}\mu _{jk}^{0} + F_{j}^{0}D_{\eta }\mu _{jk}^{0}=0.
\end{split}
\right.
\end{equation*}
\end{theorem}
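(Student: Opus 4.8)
The plan is to assemble the a priori bounds with the two preceding propositions and the Wigner compactness reviewed in Section~\ref{sec:wigner}; the only genuine subtlety is the compatible extraction of a single subsequence that simultaneously resolves the quantum and the classical limits of the coupled system. First I would record the uniform bounds: mass conservation (Theorem~\ref{mass cons}) and the $h$-independent energy estimate (Theorem~\ref{thm:energy}), together with $V\ge 0$ from~\eqref{a1}, show that $\psi^h(\cdot,t)$ is uniformly bounded in $L^2(\R)$ and $h$-oscillatory on $[0,T]$. The forcing $\Theta[\Upsilon_d^h]w^h$ in the associated Wigner equation is uniformly bounded (since $V$ is bounded and $\mu^h$ has conserved mass), which yields equicontinuity in $t$ of $w^h$ in $\mathcal A'$ uniformly in $h$. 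By the compactness result of~\cite{lions1993mesures} reviewed above, there is then a subsequence $h_n\to 0_+$ and a limit $\nu$ with $w^{h_n}[\psi^{h_n}]\to\nu$ in $L^\infty([0,T];\mathcal A')$ weak-$\ast$. Along this subsequence $|\psi^{h_n}(\cdot,t)|^2\,dx\to\int_\R\nu\,d\xi$ weak-$\ast$, whence $F_j^{h_n}(t)\to F_j^0(t)$, exactly as in the discussion preceding the theorem.

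With $F_j^0$ so identified, the preceding convergence result for $\mu^h$ applies along the same subsequence: it simultaneously delivers $\mu_{jk}^{h_n}\to\mu_{jk}^0$ in $L^\infty[0,T]$ for each $j,k$ and the fact that the limit $\mu_{jk}^0$ is, by construction, the solution of $\tfrac{d}{dt}\mu_{jk}^0+\eta_k D_y\mu_{jk}^0+F_j^0 D_\eta\mu_{jk}^0=0$ with the same initial datum. This is the second convergence and the second limiting equation. In particular, the uniform convergence $\mu_{jk}^{h_n}\to\mu_{jk}^0$ on the finite $(j,k)$-grid gives $\Upsilon_d^{h_n}(x,t)\to\Upsilon_d^0(x,t)$, with $\Upsilon_d^0$ as in the statement.

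For the equation satisfied by $\nu$ I would start from the Wigner-transformed Schr\"odinger equation in~\eqref{quad-sle}, namely $\partial_t w^h+\xi\,\partial_x w^h+\Theta[\Upsilon_d^h]w^h=0$ (obtained exactly as in Section~\ref{sec:wigner}, with $\Upsilon_d^h$ in place of $\Upsilon^h$), and test it against $\phi(t)\chi(x,\xi)$ with $\phi\in C_c^\infty(0,T)$ and $\chi\in C_c^\infty(\R_x\times\R_\xi)\subset\mathcal A$. The time-derivative and transport terms pass to the limit by the weak-$\ast$ convergence $w^{h_n}\to\nu$ after integrating by parts in $t$ and $x$, using that $\xi\,\partial_x\chi$ again lies in $C_c^\infty$. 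The nonlocal term is handled by the proposition immediately preceding this theorem, which gives $\Theta[\Upsilon_d^{h_n}]w^{h_n}\to-\partial_x\Upsilon_d^0\,\partial_\xi\nu$ in $L^\infty([0,T];\mathcal A')$ weak-$\ast$, with $\Upsilon_d^0$ built from the limit $\mu_{jk}^0$ already identified. Collecting the three limits shows that $\nu$ solves $\partial_t\nu+\xi\,\partial_x\nu-\partial_x\Upsilon_d^0\,\partial_\xi\nu=0$ in the sense of distributions, which is the first limiting equation and closes the coupled semi-discrete Liouville system.

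The substantive analytic difficulty — that the nonlocal pseudo-differential operator $\Theta[\Upsilon_d^h]$ degenerates, as $h\to 0_+$, into the classical transport term $-\partial_x\Upsilon_d^0\,\partial_\xi\nu$ — has been isolated in the preceding proposition, whose proof parallels Lemma~4.5 of~\cite{Jin:2017bh}. The main obstacle that remains is therefore the bookkeeping of the nonlinear coupling: the force $F_j^0$ driving the $\mu$-equation is determined by $\nu$, while the potential $\Upsilon_d^0$ driving the $\nu$-equation is determined by $\mu^0$, so one must guarantee that a single subsequence $h_n$ realizes both limits at once and correctly identifies the cross-couplings $F_j^{h_n}\to F_j^0$ and $\Upsilon_d^{h_n}\to\Upsilon_d^0$. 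The ordered extraction above — first $\nu$, hence $F_j^0$, then $\mu^0$ along the same subsequence, hence $\Upsilon_d^0$ — accomplishes precisely this, so that no incompatible further extraction is needed.
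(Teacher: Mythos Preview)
Your proposal is correct and follows essentially the same approach as the paper: the theorem is stated immediately after the two propositions on the convergence of $\mu_{jk}^h$ and of $\Theta[\Upsilon_d^h]w^h$, and is meant to be a direct assembly of those results together with the uniform bounds and Wigner compactness established earlier in the section. Your explicit discussion of the ordered subsequence extraction (first $\nu$, hence $F_j^0$, then $\mu^0$, hence $\Upsilon_d^0$) makes precise a point the paper leaves implicit, but the logical structure is the same.
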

\begin{remark}
Numerical experiments show that the same type of behavior is true not only for mixed spectral-finite difference schemes, but also
purely spectral schemes, see \cite{Jin:2017bh}. Our proof, however, only works for the former case due to the required positivity of the energy.
\end{remark}


\section{Time-discretization}\label{sec:time}

We finally turn to the time-discretization of our splitting scheme (in one dimension  $d=n=1$) as given by \eqref{ts1}, \eqref{ts2}.
In this section we want to show that it is asymptotic preserving in the sense that in the limit $h\to 0_+$, it yields
the corresponding time-splitting scheme of \eqref{liou liou 1}--\eqref{liou liou 2}, i.e.,
\begin{equation} \label{macro1}
\left\{
\begin{split}
&\partial _{t}\nu +\xi \partial _{x}\nu =0, \\
&\frac{d}{dt}\mu_{jk}+\eta _{k}\left( D_{y}\mu \right) _{jk}+F_{j}^{0}\left( D_{\eta }\mu\right) _{jk}=0,
\end{split}
\right.
\end{equation}
and
\begin{equation} \label{macro2}
\left\{
\begin{split}
&\partial _{t}\nu -\partial _{x}\Upsilon _{d}^{0}\left( x,t\right) \partial
_{\xi }\nu =0, \\
& \frac{d}{dt}\mu _{jk}=0.
\end{split}
\right.
\end{equation}
In turn, this shows that $\Delta t\sim \mathcal O(1)$ can be chosen independent of the small parameter $h$. To this end, it suffices to show that in our time-spitting method $\psi ^{h }$ is $h-$oscillatory, i.e.
\begin{equation*}
\underset{0<h \leq 1}{\sup }\left\Vert h \partial _{x}\psi
^{h }\right\Vert _{L_{x}^{2}}\leq C,
\end{equation*}%
where the constant $C$ depends only on the final time $T$. Then, following the arguments given in the previous section, one has convergence of the forcing term $F^h_j$ as $h\to 0$. In turn, 
this yields convergence of our numerical scheme towards the corresponding scheme of the limiting equation, as stated in \eqref{macro1} and \eqref{macro2}. 

We consequently consider the splitting scheme \eqref{ts1}, \eqref{ts2} and recall
that in both splitting steps, the first equation, i.e., the quantum
part is solved exactly in time. A straightforward calculation then shows that $\left\Vert h \partial _{x}\psi ^{h }\right\Vert^2
_{L_{x}^{2}}$ is conserved in the first splitting step \eqref{ts1}, i.e.
\begin{eqnarray*}
&&\frac{d}{dt}\int h ^{2}\left\vert \partial _{x}\psi ^{h
}\right\vert ^{2}dx=h ^{2}\int \partial _{tx}\overline{\psi ^{h }}%
\partial _{x}\psi ^{h }+\partial _{x}\overline{\psi ^{h }}\partial
_{tx}\psi ^{h }dx \\
&=&-h ^{2}\int \partial _{t}\overline{\psi ^{h }}\partial
_{xx}\psi ^{h }+\partial _{xx}\overline{\psi ^{h }}\partial
_{t}\psi ^{h }dx=-h ^{2}\int -\frac{ih }{2}\partial _{xx}%
\overline{\psi ^{h }}\partial _{xx}\psi ^{h }+\partial _{xx}%
\overline{\psi ^{h }}\frac{ih }{2}\partial _{xx}\psi ^{h
}dx=0.
\end{eqnarray*}%
Next, we shall show that $\left\Vert h \partial _{x}\psi ^{h }\right\Vert^2
_{L_{x}^{2}}$ remains bounded during the second splitting step \eqref{ts2}: Recall that
$\Upsilon _{d}^{h }\left( x,t\right) $ is in fact independent of $t
$, due to the fact that $\frac{d}{dt}\mu _{jk}^{h }=0
$ in this step. Since
\begin{equation*}
\partial _{t}{}_{x}\psi ^{h } =-\frac{i}{h }\partial
_{x}\Upsilon _{d}^{h }\left( x,t\right) \psi ^{h }-\frac{i}{h
}\Upsilon _{d}^{h }\left( x,t\right) \partial _{x}\psi ^{h },
\end{equation*}%
we find
\begin{eqnarray*}
&&\frac{d}{dt}\int h ^{2}\left\vert \partial _{x}\psi
^{h }\right\vert ^{2}dx=h ^{2}\int \partial _{tx}\overline{\psi
^{h }}\partial _{x}\psi ^{h }+\partial _{x}\overline{\psi ^{h
}}\partial _{tx}\psi ^{h }dx
\\
&=&h \int i\left( \partial _{x}\Upsilon _{d}^{h }\left( x,t\right)
\overline{\psi ^{h }}+\Upsilon _{d}^{h }\left( x,t\right) \partial
_{x}\overline{\psi ^{h }}\right) \partial _{x}\psi ^{h }-i\partial
_{x}\overline{\psi ^{h }}\Big( \partial _{x}\Upsilon _{d}^{h
}\left( x,t\right) \psi ^{h }+\Upsilon _{d}^{h }\left( x,t\right)
\partial _{x}\psi ^{h }\Big) dx
\\
&=&-2h \int \mathop{\rm Im}\left( \partial _{x}\Upsilon _{d}^{h
}\left( x,t\right) \overline{\psi ^{h }}\partial _{x}\psi ^{h}\right) dx
\leq 2\left\Vert \partial _{x}\Upsilon _{d}^{h }\left(
x,t\right) \right\Vert _{L_{x}^{\infty }}\left\Vert \psi ^{h
}\right\Vert _{L_{x}^{2}}\left\Vert h \partial _{x}\psi ^{h
}\right\Vert _{L_{x}^{2}},
\end{eqnarray*}%
where
\begin{equation*}
\left\Vert \partial _{x}\Upsilon _{d}^{h }\left( x,t\right) \right\Vert
_{L_{x}^{\infty }}=\left\Vert
\sum\limits_{j=0}^{J-1}\sum\limits_{k=0}^{K-1}\partial _{x}V\left(
x,y_{j}\right) \mu _{jk}^{h }\Delta y\Delta \eta \right\Vert
_{L_{x}^{\infty }}\leq \left\Vert \partial _{x}V\left( x,y\right)
\right\Vert _{L^{\infty }}.
\end{equation*}%
Since $\left\Vert \psi ^{h}\right\Vert _{L_{x}^{2}}=1$ is conserved by our scheme, we thus have
\begin{equation*}
\frac{d}{dt}\left\Vert h \partial _{x}\psi ^{h }\right\Vert
_{L_{x}^{2}}\leq C_{0},
\end{equation*}%
where $C_{0}=\left\Vert \partial _{x}V\left( x,y\right)
\right\Vert _{L^{\infty }}$ is some constant independent of $h$. Hence, in
the second splitting step \eqref{ts2} one has
\begin{equation*}
\left\Vert h \partial _{x}\psi ^{h ,n+1}\right\Vert
_{L_{x}^{2}}\leq \left\Vert h \partial _{x}\psi ^{h ,\ast
}\right\Vert _{L_{x}^{2}}+C_{0}\Delta t=\left\Vert h \partial _{x}\psi
^{h ,n}\right\Vert _{L_{x}^{2}}+C_{0}\Delta t,
\end{equation*}%
where we used the fact that $\left\Vert h \partial _{x}\psi
^{h }\right\Vert _{L_{x}^{2}}$ is conserved during \eqref{ts1}. In summary, this yields
\begin{equation*}
\left\Vert h \partial _{x}\psi ^{h ,n}\right\Vert _{L_{x}^{2}}\leq
\left\Vert h \partial _{x}\psi _{\rm in}^{h }\right\Vert
_{L_{x}^{2}}+C_{0}t_{n}\leq \left\Vert h \partial _{x}\psi
_{\rm in}^{h }\right\Vert _{L_{x}^{2}}+C_{0}T,
\end{equation*}%
where the right hand side is some constant independent of $h $ thanks
to the assumption on initial data \eqref{a2}. This shows that $%
\psi ^{h ,n}$ is $h-$oscillatory for any $n\in \N$, with $0\leq
t_{n}\leq T$ and the result follows from the arguments in the previous section.
{{
\begin{remark}
Note that all estimates above remain valid in the context of a Strang splitting scheme.
\end{remark}
}}


\section{Numerical examples}\label{sec:examples}

In this final section, we shall report on a few numerical examples, which illustrate the validity of our algorithm and meshing strategy.
To this end, we
choose an interaction potential of the form \[
V\left( x,y\right) =\frac{%
\left( x+y\right) ^{2}}{2},\] and solve the one-dimensional SLE system on the interval $x\in %
\left[ -\pi ,\pi \right] $ and $y,\eta \in \left[ -2\pi ,2\pi \right] $ with
periodic boundary conditions.

\begin{example}[$\Delta t$ independent of $h$]  \label{ex1n}
We choose initial conditions for the SLE system \eqref{SL-1} as follows:
\begin{equation*}
\psi _{\rm in}\left( x\right) =\exp\left({-25\left( x+0.2\right) ^{2}}\right) \exp\left({\frac{-i\ln
\left( 2\cosh \left( 5\left( x+0.2\right) \right) \right) }{5h }}\right),
\end{equation*}
and
\[
\ \mu
_{\rm in}\left( y,\eta \right) =\left\{
\begin{array}{cc}
C_{\rm N}\, \exp\left({-\frac{1}{1-y^{2}}}\right) \exp\left({-\frac{1}{1-\eta ^{2}}}\right), & \  \text{for $\left\vert y\right\vert <1$, $\left\vert \eta \right\vert <1$} \\
0, & \text{otherwise}.%
\end{array}
\right.
\]
Here, $C_{\rm N}>0$ is the normalization factor such that $\iint_{%
\mathbb{R}^{2}}\mu _{in} dy\, d\eta $=1.
{{Here we use the time-splitting method with spectral-upwind scheme (i.e., with an upwind scheme for the Liouville's equation).}} For $h =$ $\frac{1}{256},\frac{1}{1024},%
\frac{1}{4096}$, we fix the stopping time $T=0.5$ and choose $\Delta x=\frac{%
2\pi h }{16}$, $\Delta y=\Delta \eta =\frac{4\pi }{128}$.
For each choice of $%
h $, we shall solve the SLE system first with
$\Delta t\text{ independent of }h$ and, second, with $
\Delta t=o\left( h \right)$. To be more specific, we compare the two cases where $\Delta t=0.01$ and $\Delta t=%
\frac{h }{10}$. It can be observed from Figure \ref{ex1}, that the macroscopic position and current densities associated to the solution of Schr\"{o}dinger's equation agree well with each other.

\begin{figure}[tbph]
\textrm{{} }
\par
\textrm{\setcaptionwidth{4.6in} }
\par
\begin{center}
\textrm{%
\subfloat[Position Density]{\includegraphics[scale=0.37]
{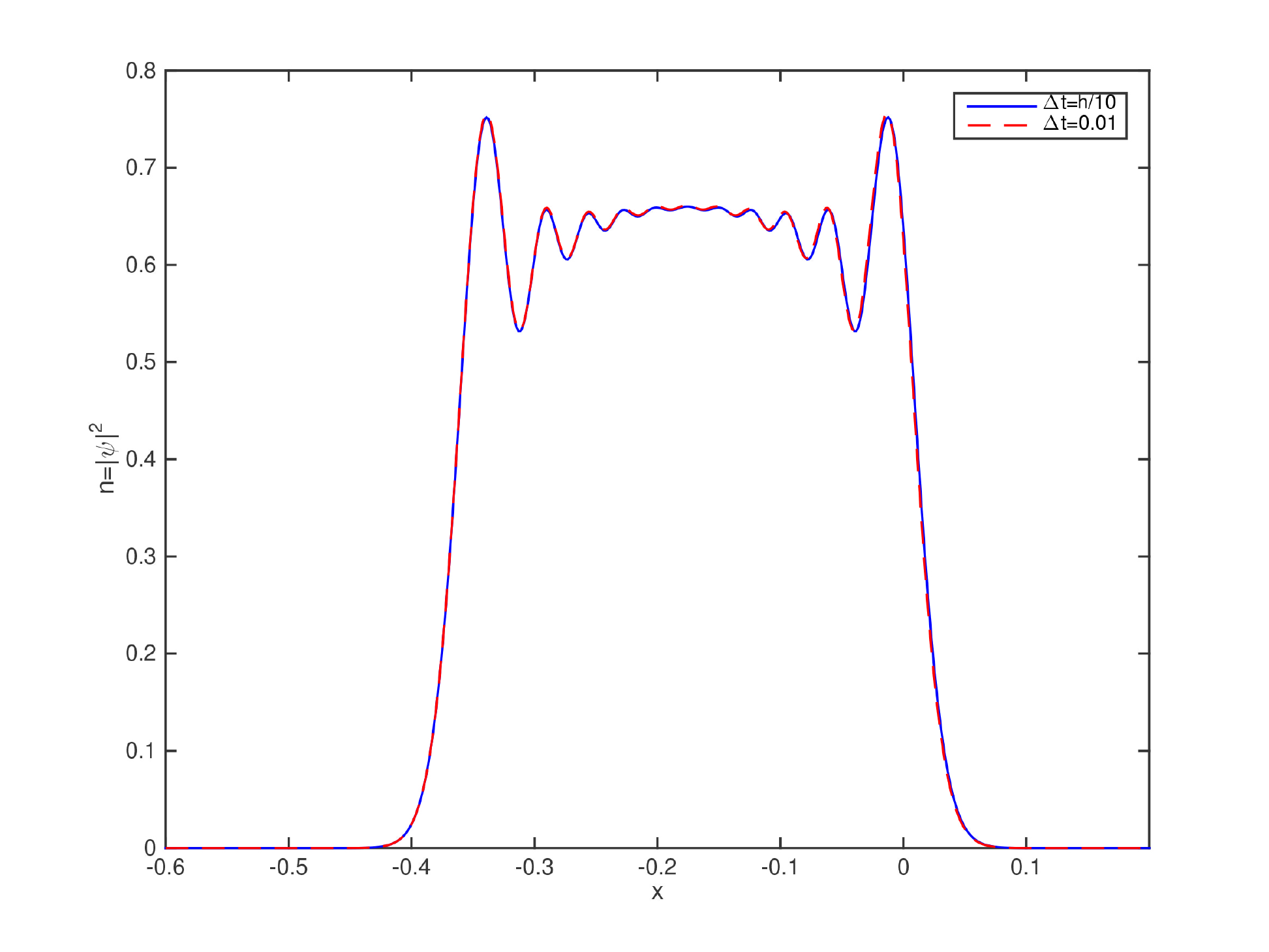}}
\subfloat[Current Density]{\includegraphics[scale=0.37]
{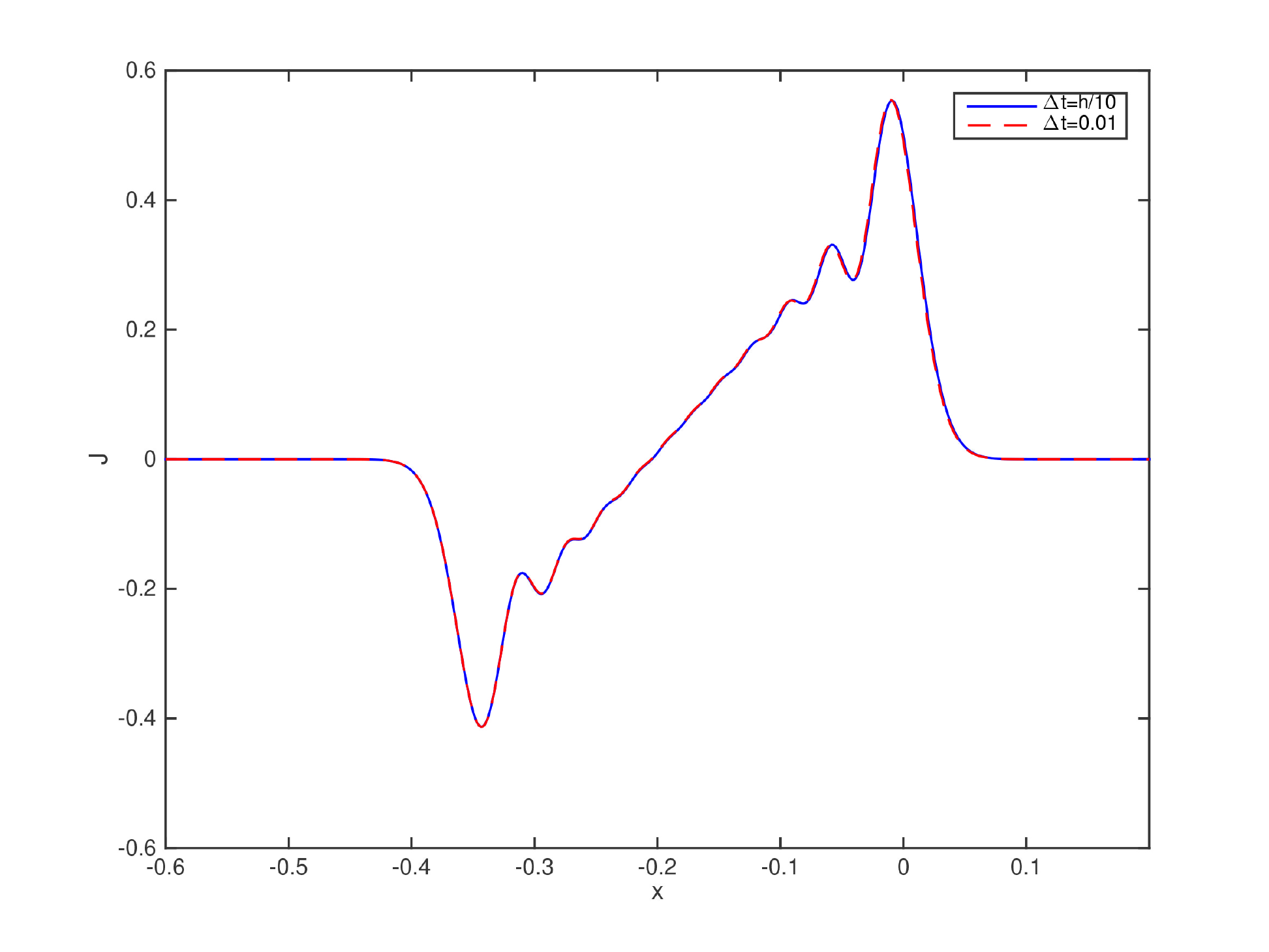}} 
}

\textrm{%
\subfloat[Position Density]{\includegraphics[scale=0.37]
{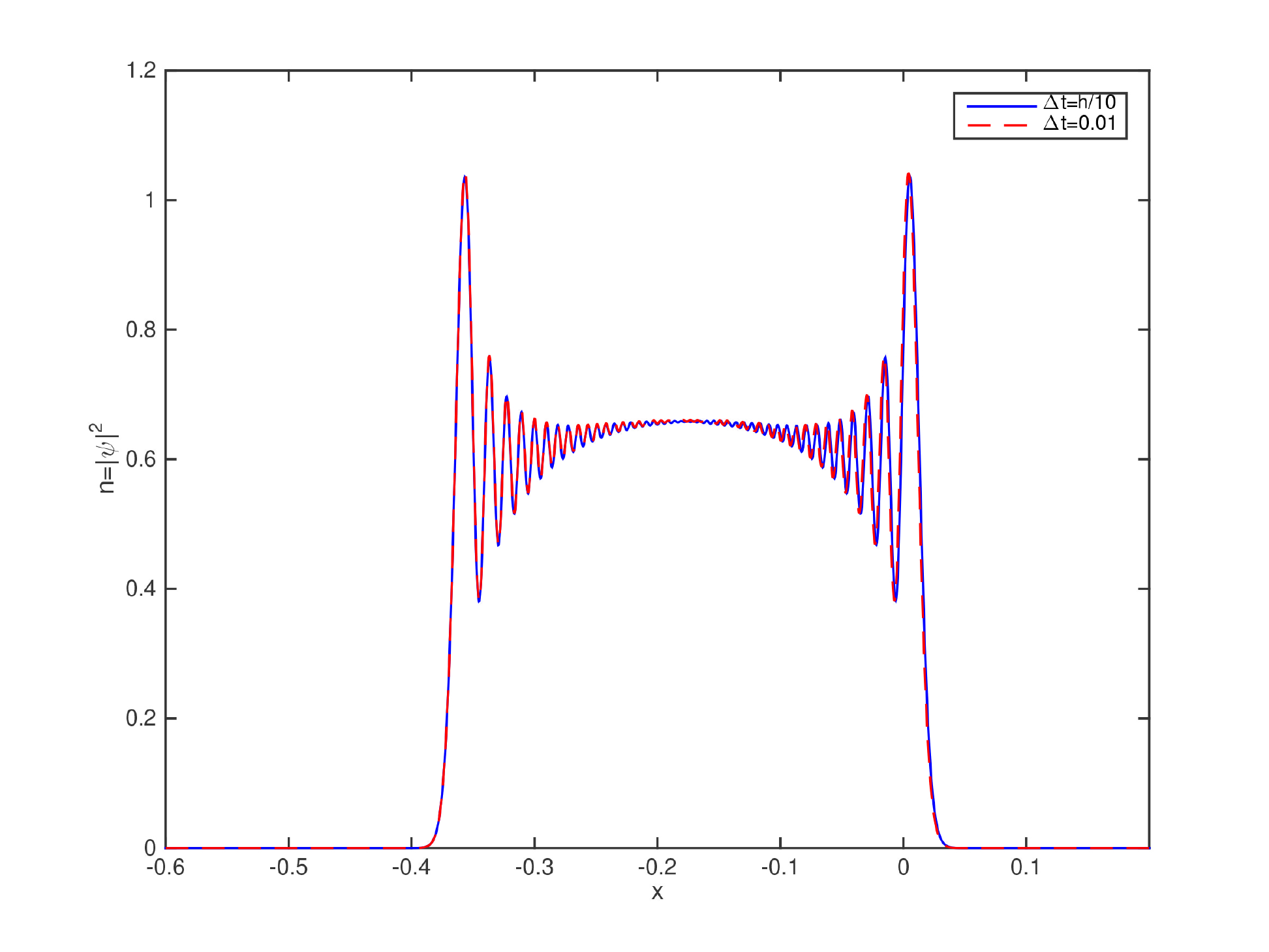}}
\subfloat[Current Density]{\includegraphics[scale=0.37]
{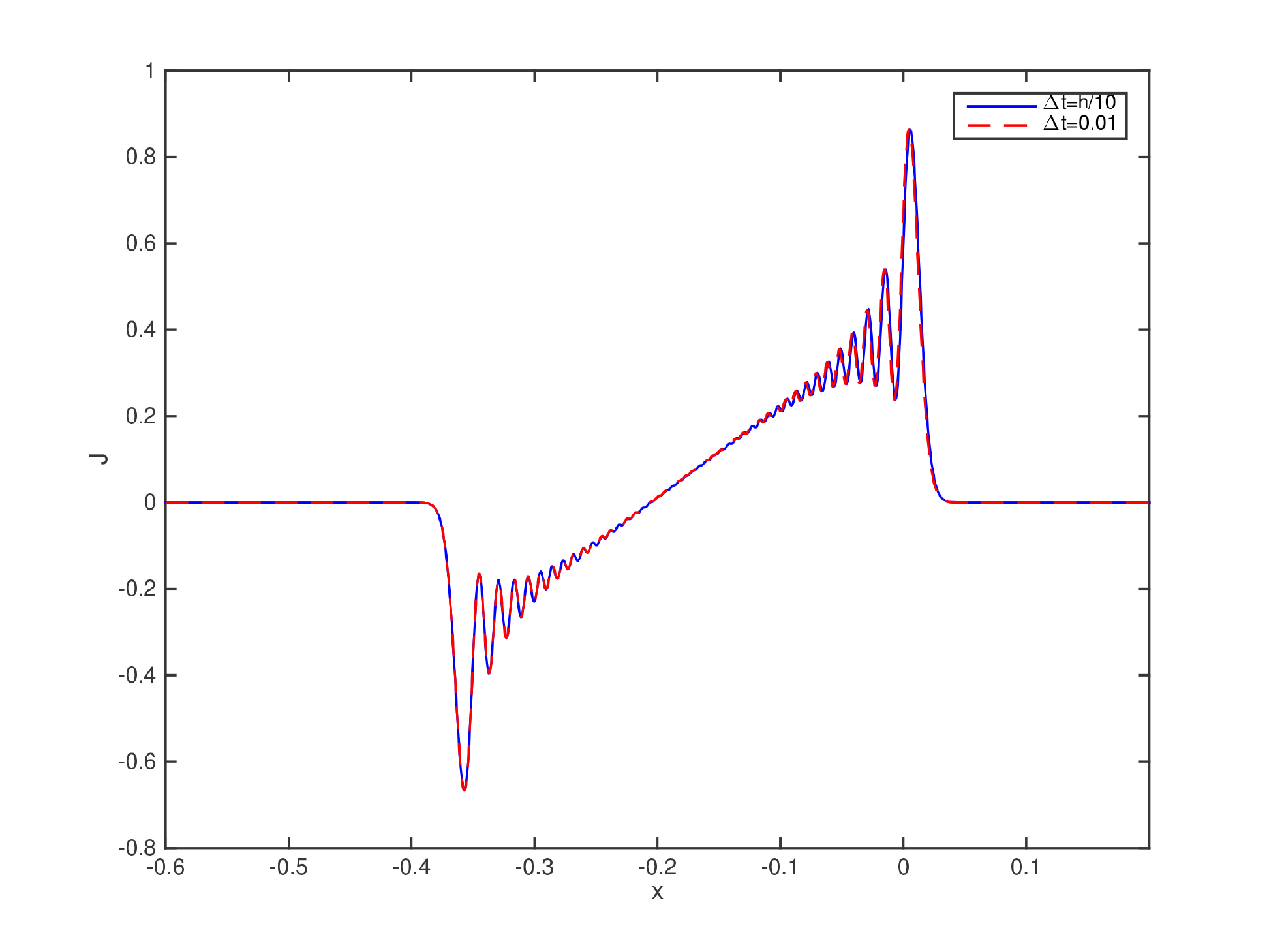}} }

\textrm{%
\subfloat[Position Density]{\includegraphics[scale=0.37]
{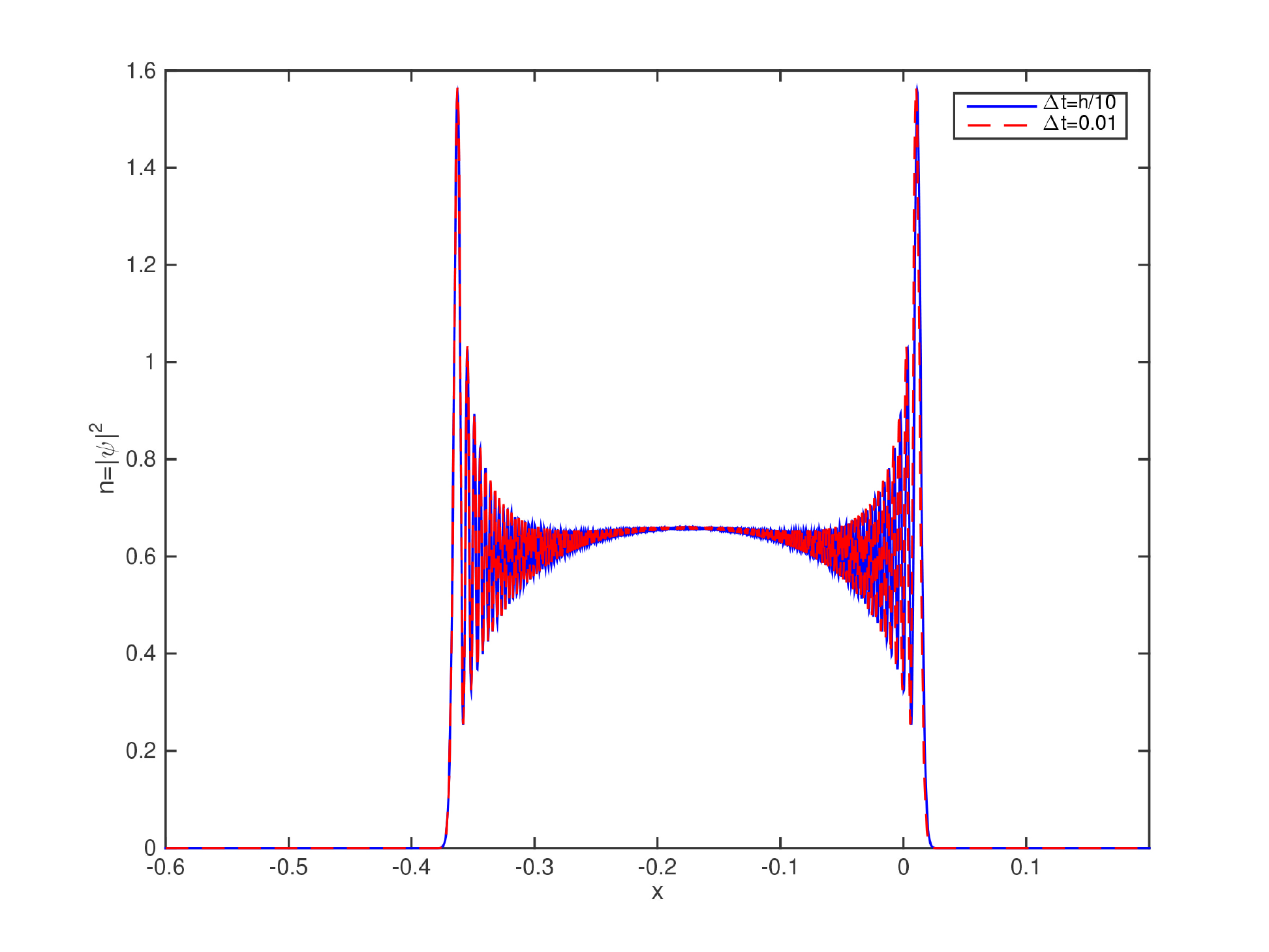}}
\subfloat[Current Density]{\includegraphics[scale=0.37]
{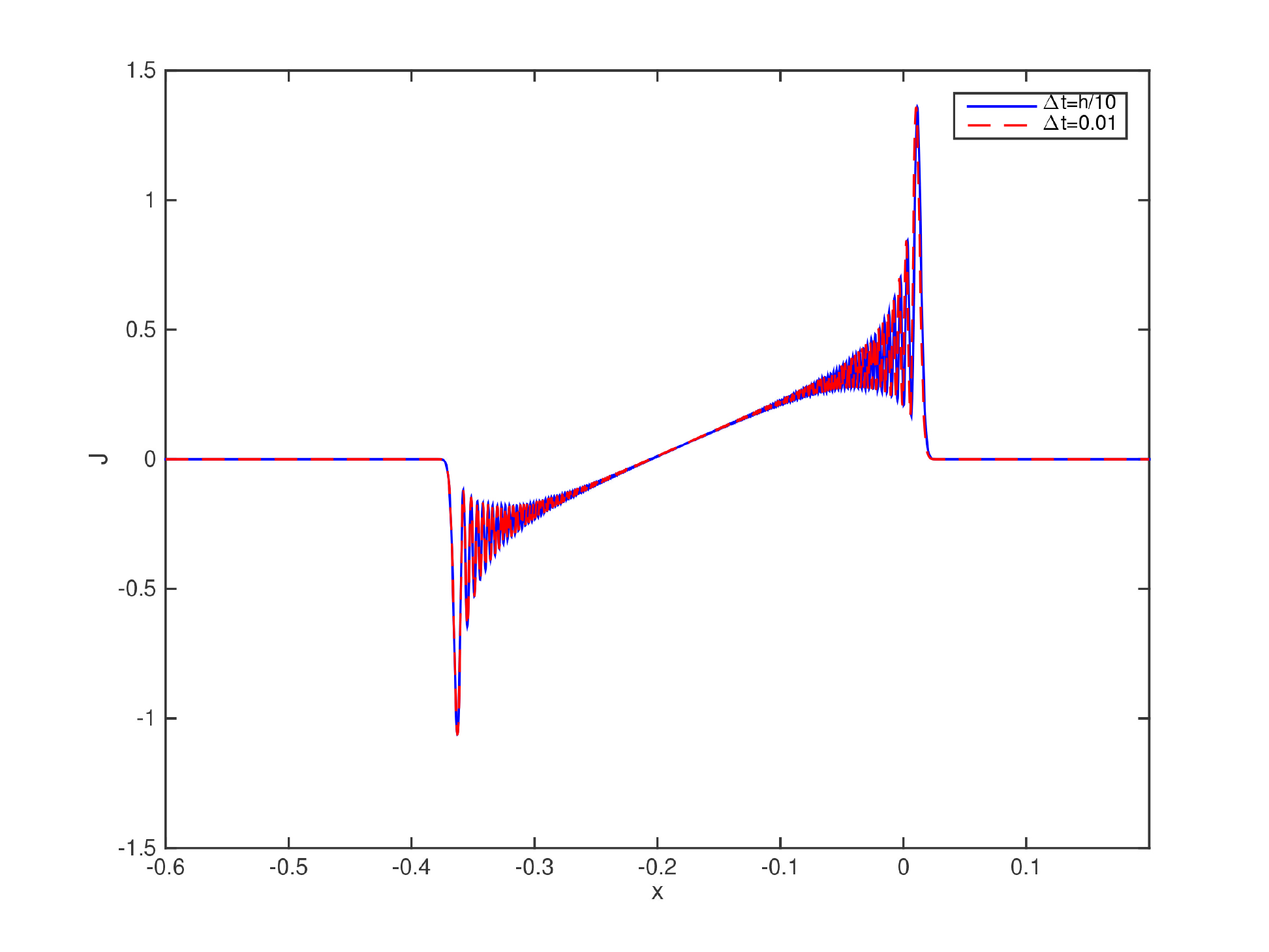}} }
\end{center}
\setcaptionwidth{4.6in}
\caption{Numerical solutions at $T=0.5$ in Example \ref{ex1n} computed by the
time-splitting method using different meshing strategies. First row: $h =\frac{1}{256}$; Second row: $h =\frac{1}{1024}$; Third
row: $h =\frac{1}{4096}$. }
\label{ex1}
\end{figure}

\textrm{In addition, we compare the numerical values of $\mu$ computed by $\Delta t=0.01$ and $\Delta t=\frac{h }{10}$ (denoted as $\mu_1$ and $\mu_2$, respectively). As shown in Table \ref{table1}, the error is insensitive in $h$, showing a uniform in $h$ convergence in $\Delta t$. }

\begin{table}[ht]
\centering
\setcaptionwidth{4.6in}

\begin{tabular}{|c|c|c|c|}
\hline
$h $ & $1/256$ & $1/1024$ & $1/4096$ \\ \hline
$\frac{\left\Vert \mu_1-\mu_2 \right\Vert_{\ell^2}}{\left\Vert \mu_2 \right\Vert_{\ell^2}} $ & 1.65e-03 & 1.69e-03 & 1.70e-03 \\ \hline
\end{tabular}%
\caption{The relative $\ell^2-$difference (defined as $\frac{\left\Vert \mu_1-\mu_2 \right\Vert_{\ell^2}}{\left\Vert \mu_2 \right\Vert_{\ell^2}} $) for various $h$.}
\label{table1}
\end{table}
\end{example}

\begin{example}[Numerical error as $h $ decreases] \label{ex2}
In this example, we choose the same initial data for $\mu_{\rm in}$ as before and
\begin{equation*}
\psi _{\rm in}\left( x\right) =\exp\left({-5\left( x+0.1\right) ^{2}}\right)\exp \left({\frac{i\sin x}{%
h }}\right).
\end{equation*}%
Now, we fix $\Delta t=$ $0.01$, a stopping time $T=0.4$, and $\Delta y=\Delta \eta =%
\frac{4\pi }{128}$. We choose $\Delta x=\frac{2\pi
h }{16}$, for $h =\frac{1}{64},\frac{1}{128},\frac{1}{256},%
\frac{1}{512},\frac{1}{1024},\frac{1}{2048}$, respectively. The reference solution is computed with $\Delta
t=\frac{h }{10}$. From the $\ell^2$-error plotted in Figure \ref{delta conv},
one can see that although the error in the wave function increases as $%
h $ decreases, the error for the position density $| \psi^h |
^{2}$ as well as for the macroscopic quantity $\mu $ does not change noticeably. This
shows that $h-$independent time steps can be taken to accurately obtain
physical observables, but not the wave function  $\psi^h$ itself.
\end{example}

\begin{figure}[tbph]
{}
\par
\setcaptionwidth{4.6in}
\par
\begin{center}
\includegraphics[scale=0.44]
{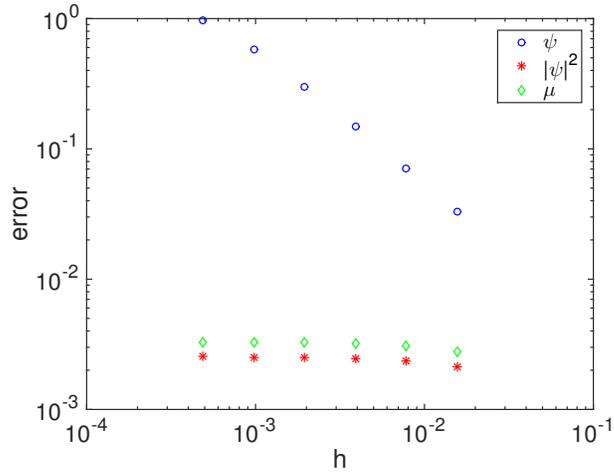}
\end{center}
\caption{Example \ref{ex2}: $\ell^2-$errors of the wave function $\psi^h$, position density $|\psi^h|^2$ and $\mu$ for various $h$. Fix $\Delta t=$ $0.01$. For $h =\frac{1}{64},\frac{1}{%
128},\frac{1}{256},\frac{1}{512},\frac{1}{1024},\frac{1}{2048}$, choose $%
\Delta x=\frac{2\pi h }{16}$ respectively. The
reference solution is computed with $\Delta t=\frac{h }{10}$. }
\label{delta conv}
\end{figure}

\begin{example}[Convergence in time]  \label{ex3}
Finally, to examine the convergence in time of our scheme, let the initial data be as in the example before.
Fix $h =$ $\frac{1}{8192}$, a stopping time $T=0.4$ and a spatial discretization with $\Delta x=\frac{2\pi
h }{16}$, $\Delta y=\Delta \eta =\frac{4\pi }{128}$. Choose $\Delta t=%
\frac{0.4}{32},\frac{0.4}{64},\frac{0.4}{128},\frac{0.4}{256},\frac{0.4}{512}%
,\frac{0.4}{1024}$. The reference solution is computed with $\Delta t=\frac{%
0.4}{81920}$. The $\ell^2$-error is plotted in Figure \ref{time conv}, which
shows first order accuracy in time of our scheme. Again, we see that the wave function exhibits errors
several orders of magnitude larger than the physical observable densities.
\end{example}

\begin{figure}[tbph]
{}
\par
\setcaptionwidth{4.6in}
\par
\begin{center}
\includegraphics[scale=0.44]
{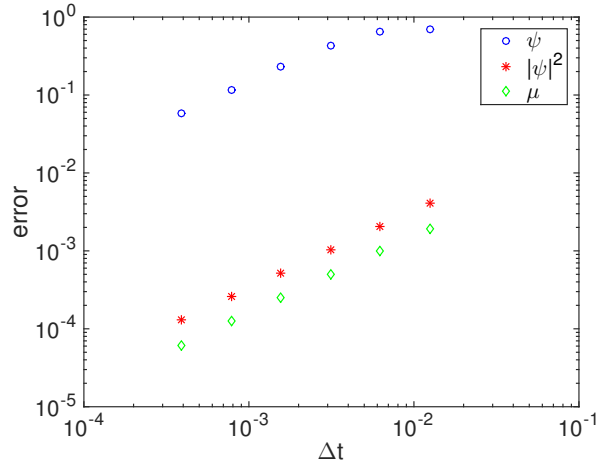}
\end{center}
\caption{Example \ref{ex3}: $\ell^2-$errors of the numerical solutions for various $\Delta t$ and fixed $h =\frac{1}{8192}$, $\Delta x=\frac{2%
\pi h }{16}$, $\Delta y=\Delta \eta =\frac{4\pi
}{128}$. It shows first order convergence of the scheme in time. The reference solution is computed with $\Delta t=\frac{0.4}{81920}$.}
\label{time conv}
\end{figure}

\bigskip

\bibliographystyle{abbrv}
\bibliography{ehrenfest}

\end{document}